\documentclass[11pt,final]{elsarticle}
\usepackage{amsmath,amsthm,amsfonts,latexsym,amssymb,enumerate,color}
\usepackage{soul,graphicx}
\usepackage[color]{showkeys}
\definecolor{refkey}{rgb}{0,0,1}
\definecolor{labelkey}{rgb}{1,0,0}
\usepackage{graphicx}
\usepackage{epstopdf}
\usepackage{soul,cancel}
\DeclareGraphicsRule{.tif}{png}{.png}{`convert #1 `basename #1 .tif`.png}

\textwidth = 6.5 in
\textheight = 9 in
\oddsidemargin = 0.0 in
\evensidemargin = 0.0 in
\topmargin = 0.0 in
\headheight = 0.0 in
\headsep = 0.0 in
\parindent = 0.0in

\makeatletter
\def\ps@pprintTitle{%
  \let\@oddhead\@empty
  \let\@evenhead\@empty
  \def\@oddfoot{\reset@font\hfil\thepage\hfil\today}
  \let\@evenfoot\@oddfoot
}
\makeatother

\newtheorem{theorem}{Theorem}
\newtheorem{lemma}[theorem]{Lemma}
\newtheorem{corollary}[theorem]{Corollary}

\newtheorem{example}[theorem]{Example}
\newtheorem{prop}[theorem]{Proposition}

\newcommand{\re}{\operatorname{Re}}
\newcommand{\im}{\operatorname{Im}}
\newcommand{\tr}{\operatorname{Tr}}
\newcommand{\diag}{\operatorname{diag}}
\newcommand{\rank}{\operatorname{rank}}
\newcommand{\Span}{\operatorname{Span}}
\newcommand{\norm}[1]{\Vert#1\Vert}
\newcommand{\scal}[1]{\langle#1\rangle}
\newcommand{\abs}[1]{\lvert#1\rvert}
\newcommand{\eq} [1] {\begin{equation}\label{#1}\quad}
\newcommand{\en} {\end{equation}}
\newcommand{\C}{\mathbb{C}}
\newcommand{\R}{\mathbb{R}}
\numberwithin{equation}{section}

\begin{document}

\begin{frontmatter}
\title{Numerical Ranges of 4-by-4 Nilpotent Matrices:\\ Flat Portions on the Boundary}

\author[fsu]{Erin Militzer}
\ead{militze@ferris.edu}

 \author[cp]{Linda J. Patton \corref{cor1}}
 \ead{lpatton@calpoly.edu}

 \author[wm,nyuad]{Ilya~M.~Spitkovsky}
 \ead{ilya@math.wm.edu, ims2@nyu.edu}

\author[au]{Ming-Cheng Tsai}
\ead{mctsai2@gmail.com}

 \cortext[cor1]{Corresponding author}


\address[fsu]{Mathematics Department, Ferris State University, Big Rapids, MI 49307}
 \address[cp]{Mathematics Department, California Polytechnic State University, San Luis Obispo, CA 93407}
 \address[wm]{Department of Mathematics, College of William and Mary, Williamsburg, VA 23187-8795}
\address[nyuad]{Division of Science, New York  University Abu Dhabi (NYUAD), Saadiyat Island,
P.O. Box 129188 Abu Dhabi, UAE}
\address[au]{Department of Mathematics and Statistics, Auburn University, Auburn, AL 36849}

\tnotetext[support]{The first three authors (EM, LJP, and IMS) started their work on this paper during the REUF workshop at the American Institute of Mathematics (Palo Alto, California) in the Summer of 2011, and their further collaboration with MCT was enabled by IMS visiting
National Sun Yat-sen University (Kaohsiung, Taiwan) in December of 2014. We gratefully acknowledge the support of the respective institutions.
IMS was also supported in part by the Plumeri Award for Faculty Excellence from the College of William and Mary and by Faculty Research funding from the Division of Science and Mathematics, New York University Abu Dhabi.}

\begin{abstract}In their 2008 paper Gau and Wu conjectured that the numerical range of a 4-by-4 nilpotent matrix has at most two
flat portions on its boundary. We prove this conjecture, establishing along the way some additional facts of independent interest. In particular, a full
description of the case in which these two portions indeed materialize and are parallel to each other is included.
\end{abstract}

\begin{keyword} numerical range, nilpotent matrix.

\end{keyword}

\end{frontmatter}

\section{Introduction}

We consider the space $\C^n$ endowed with the standard scalar product $\scal{.,.}$ and the norm $\norm{.}$ associated with it.  Elements $x\in\C^n$ are $n$-columns; however, to simplify the notation
we will write them as $(x_1,\ldots,x_n)$. For an $n$-by-$n$
matrix $A$, its {\em numerical range}, also known as the {\em field of values}, is defined as
\[ F(A)=\{ \scal{Ax,x}\colon x\in\C^n, \ \norm{x}=1\}.  \]
The classical Toeplitz-Hausdorff theorem claims that the set $F(A)$ is always convex; this and other well-known properties of the numerical range are discussed in detail, e.g., in monographs \cite{GusRa,HJ1}.

As was observed by Kippenhahn (\cite{Ki}, see also the English translation \cite{Ki08}), $F(A)$ can be described in terms of the homogeneous polynomial
\eq{pa} p_A(u,v,w)=\det(uH+vK+wI), \en where \[ H=\frac{A+A^*}{2}:=\re A, \quad K=\frac{A-A^*}{2i}:=\im A.\]
Namely, $F(A)$ is the convex hull of the curve $C(A)$ dual (in projective coordinates) to
$p_A(u,v,w)=0$. So, it is not surprising that, starting with $n=3$,  the boundary $\partial F(A)$ of $F(A)$ may contain line segments (sometimes also called {\em flat portions}), even when the polynomial $p_A$ is irreducible.
For a unitarily irreducible $3$-by-$3$ matrix there is at most one such flat portion, as was first observed in the same paper \cite{Ki}, with constructive tests for its presence provided in \cite{KRS,RS05}.

The phenomenon of flat portions in higher dimensions was further studied in \cite{BS041}. For convenience of reference, we restate here Theorem~37 from \cite{BS041}.

\begin{theorem}\label{4x4} {\em [Brown-Spitkovsky]} Any $4$-by-$4$ matrix has at most $4$ flat portions on the boundary of its numerical range ($3$, if it is unitarily irreducible). \end{theorem}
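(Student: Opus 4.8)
The plan is to use throughout Kippenhahn's representation recalled above, $F(A)=\operatorname{conv} C(A)$ with $C(A)$ the real affine part of the curve dual to $\Gamma\colon p_A(u,v,w)=0$, and to translate ``flat portion'' into each side of this duality. First I would record the analytic description. The supporting line of $F(A)$ with outer normal $e^{i\theta}$ sits at height equal to the largest eigenvalue $\lambda(\theta)$ of the Hermitian matrix $H_\theta:=\cos\theta\,H+\sin\theta\,K=\re(e^{-i\theta}A)$, and it meets $F(A)$ in a nondegenerate segment exactly when the top eigenspace $E_\theta$ satisfies $\dim E_\theta\ge 2$ and the compression of $\im(e^{-i\theta}A)$ to $E_\theta$ is non-scalar. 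Indeed, for $x\in E_\theta$ one has $\scal{Ax,x}=e^{i\theta}\bigl(\lambda(\theta)+i\scal{K_\theta x,x}\bigr)$ with $K_\theta=\im(e^{-i\theta}A)$, so the trace of $F(A)$ on this line is $e^{i\theta}$ times $\lambda(\theta)+iF(K_\theta|_{E_\theta})$, which is a segment iff the Hermitian matrix $K_\theta|_{E_\theta}$ has two distinct eigenvalues. In particular every flat portion forces a multiple top eigenvalue of $H_\theta$.

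Dually, a flat portion is a supporting line touching $C(A)$ at two or more points, i.e.\ a real bitangent of $C(A)$ that bounds the convex hull. By projective duality, bitangents of $C(A)=\Gamma^{*}$ correspond to nodes (ordinary double points) of $\Gamma$, and a real node at $[\cos\theta:\sin\theta:w_0]$ is precisely the multiple-eigenvalue locus $\lambda(H_\theta)=-w_0$ isolated above. Hence the number of flat portions is bounded by the number of real singular points of $\Gamma$, together with, in degenerate situations, any line segments already contained in $C(A)$, which I would treat separately in the reducible case.

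The main estimate is then a genus count. When $p_A$ is irreducible, $\Gamma$ is an irreducible plane quartic, whose singularities satisfy $\sum_P\delta_P\le\binom{d-1}{2}=\binom{3}{2}=3$; as each node contributes $\delta_P=1$, the curve has at most three nodes, so $F(A)$ has at most three flat portions. To obtain the stated dichotomy I would show that attaining a fourth flat portion forces $\Gamma$ to degenerate --- either $p_A$ acquires a linear or repeated factor, or $C(A)$ itself contains a genuine segment --- and that each such degeneration forces $A$ to be unitarily reducible (a segment lying on $C(A)$, as opposed to a bitangent bridge, comes from a reducing subspace). Equivalently, a unitarily irreducible $A$ keeps us in the irreducible-quartic regime and hence at $\le 3$.

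It remains to bound the reducible case by $4$. Writing $A=\bigoplus_i A_i$ as a unitary direct sum of irreducible blocks whose sizes partition $4$, we have $F(A)=\operatorname{conv}\bigcup_i F(A_i)$, where each $F(A_i)$ is a point, a segment, an ellipse ($2\times2$ blocks), or, for a $3\times3$ block, a convex set with at most one flat portion by the $3\times3$ theory recalled in the Introduction. One then runs through the partitions $3+1$, $2+2$, $2+1+1$, and $1+1+1+1$, counting the flat portions of the convex hull: each is either inherited from a single block or is a common supporting ``bridge'' segment spanning two blocks. The decisive subcases are two $2\times2$ blocks, where two interlocking ellipses meeting in up to four points can produce four bridges, and the fully diagonal (normal) case, where $F(A)$ is a quadrilateral with exactly four edges; in every configuration the total is at most $4$, with equality only in these reducible extremal cases. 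The main obstacle is exactly this reducible bookkeeping: the naive node count can exceed $4$ (four lines meet in six points), so one must invoke the convexity/support condition to discard bitangents that fail to bound $\operatorname{conv} C(A)$, and one must handle carefully the degenerate curves $C(A)$ containing segments --- together with the subtle point, also underlying the irreducible step, that unitary irreducibility of $A$ is strictly stronger than irreducibility of $p_A$ and must be reconciled with it.
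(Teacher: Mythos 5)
First, a point of order: the paper does not actually prove this statement. Theorem~\ref{4x4} is imported verbatim as Theorem~37 of \cite{BS041}, so there is no in-paper argument to compare against; your proposal has to be judged against the known proof in that reference. Its skeleton is sound as far as it goes: Kippenhahn duality, the observation that every flat portion forces a singular point of $p_A(u,v,w)=0$ at the line coordinates of its supporting line, and the bound $\sum_P\delta_P\le\binom{d-1}{2}=3$ for an irreducible quartic. That correctly gives ``at most $3$'' whenever $p_A$ is irreducible.

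The genuine gap is that you have the implication between the two notions of irreducibility backwards. Since $A\simeq A_1\oplus A_2$ gives $p_A=p_{A_1}p_{A_2}$, irreducibility of $p_A$ implies unitary irreducibility of $A$ --- not the converse, which is what you assert (``unitary irreducibility of $A$ is strictly stronger than irreducibility of $p_A$''). A unitarily irreducible $4$-by-$4$ matrix can perfectly well have a reducible Kippenhahn polynomial: for the $4$-by-$4$ Jordan block $J_4$ one has $p_{J_4}(u,v,w)=\bigl(w^2-\cos^2(\pi/5)(u^2+v^2)\bigr)\bigl(w^2-\cos^2(2\pi/5)(u^2+v^2)\bigr)$, a product of two conics. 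Consequently your pivotal claim that ``each such degeneration forces $A$ to be unitarily reducible'' is false, and the case that carries all the weight of the ``$3$ if unitarily irreducible'' clause --- unitarily irreducible $A$ with $p_A$ splitting as $2+2$, $3+1$, $2+1+1$ or $1+1+1+1$ --- is left unaddressed. The genus bound is of no help there: the irreducible components may be smooth, yet $\Gamma$ acquires singular points at their pairwise intersections (up to $4$ for two conics by B\'ezout, $6$ for four lines), so the naive singularity count exceeds $3$, and, as you yourself note, two crossed ellipses can support four bridges. Showing that the four-bridge configurations force a reducing subspace is precisely the case-by-case content of \cite{BS041}, and it is asserted rather than supplied in your sketch; the same is true of the bookkeeping that caps the fully reducible case at $4$.
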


Of course, the upper bounds may be lower if an additional structure is imposed on $A$. In this paper, we will tackle the case of nilpotent matrices. For reducible $4$-by-$4$ nilpotent matrices it is easy to see that the maximum possible number of flat portions is one; for the sake of completeness, this result is stated with a proof in Section~\ref{wk} (Proposition~\ref{p:red}). Unitarily irreducible $4$-by-$4$ nilpotent matrices were considered by Gau and Wu in \cite{GauWu081}, where in particular examples of such matrices $A$ with two flat portions on $\partial F(A)$ were given and it was also conjectured that three flat portions do not materialize. This conjecture was supported there by the following theorem, which is a special case of their result for $n$-by-$n$ matrices \cite[Theorem~3.4]{GauWu081}.

\begin{theorem}\label{GauWuCircular} {\em [Gau-Wu]} If $A$ is a $4$-by-$4$ nilpotent matrix which has a $3$-by-$3$ submatrix $B$ with $W(B)$ a circular disk centered at the origin, then there are at most two flat portions on the boundary of $F(A)$. \end{theorem}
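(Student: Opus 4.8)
The plan is to work with the support function of $F(A)$ and to exploit Cauchy interlacing together with nilpotency. Writing $H_\theta=\cos\theta\,H+\sin\theta\,K$, the support function of $F(A)$ in the direction $e^{i\theta}$ is $h_A(\theta)=\lambda_{\max}(H_\theta)=\mu_1(\theta)$, where $\mu_1(\theta)\ge\mu_2(\theta)\ge\mu_3(\theta)\ge\mu_4(\theta)$ are the eigenvalues of the Hermitian matrix $H_\theta$. After a unitary similarity I may assume $B$ is the leading $3\times 3$ principal submatrix, so that $H_\theta^B:=\cos\theta\,\re B+\sin\theta\,\im B$ is the corresponding principal submatrix of $H_\theta$. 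Since $W(B)$ is the disk of radius $r$ centered at the origin, its support function is constant, i.e. $\lambda_{\max}(H_\theta^B)\equiv r$, and Cauchy interlacing gives the sandwich
\[ \mu_2(\theta)\le r\le\mu_1(\theta)\qquad(\theta\in[0,2\pi)). \]
A flat portion with outward normal $e^{i\theta_0}$ lies on the support line and equals the numerical range of the compression of $A$ to the top eigenspace $\ker(\mu_1(\theta_0)I-H_{\theta_0})$; hence it can be nondegenerate only if that eigenspace has dimension at least two, i.e. $\mu_1(\theta_0)=\mu_2(\theta_0)$. By the sandwich this forces $\mu_1(\theta_0)=\mu_2(\theta_0)=r$, so in particular $r$ is an eigenvalue of $H_{\theta_0}$. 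Thus every flat portion contributes a distinct normal direction $\theta_0$ that is a zero of
\[ P(\theta):=\det(rI-H_\theta)=\prod_{i=1}^4\bigl(r-\mu_i(\theta)\bigr). \]

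Next I would record two facts about $P$. First, its sign: the sandwich says exactly one factor, $r-\mu_1(\theta)$, is $\le 0$ while the remaining three are $\ge 0$, so $P(\theta)\le 0$ for all $\theta$. Second---and this is the crux---its degree as a trigonometric polynomial. Substituting $z=e^{i\theta}$ and using $H_\theta=\tfrac12(z^{-1}A+zA^*)$,
\[ z^4P(\theta)=\det\bigl(z(rI-H_\theta)\bigr)=\det\bigl(rzI-\tfrac12A-\tfrac12z^2A^*\bigr), \]
a polynomial in $z$ of degree at most $8$. Its $z^8$ and $z^0$ coefficients are constant multiples of $\det A^*$ and $\det A$, while its $z^7$ and $z^1$ coefficients are constant multiples of the sums of $3\times3$ principal minors of $A^*$ and of $A$ (the third elementary symmetric functions of their eigenvalues). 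Because $A$, and hence $A^*$, is nilpotent, all four of these quantities vanish. Therefore only the powers $z^2,\dots,z^6$ survive, which means $P$ is a trigonometric polynomial of degree at most $2$.

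Finally I would count. A trigonometric polynomial of degree at most $2$ that is not identically zero has at most four zeros in $[0,2\pi)$, counted with multiplicity. Since $P\le 0$, each of its zeros is a global maximum and hence has even multiplicity; consequently $P$ has at most two distinct zeros. As every flat portion yields a distinct normal direction at which $P$ vanishes, there are at most two flat portions. The degenerate case $P\equiv 0$ (where $r$ is an eigenvalue of $H_\theta$ for every $\theta$) I would treat separately: there $h_A\equiv r$ on the relevant arc and $\partial F(A)$ carries a circular piece rather than a segment, so no additional flat portions appear.

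I expect the main obstacle to be precisely the degree drop from $4$ to $2$. Theorem~\ref{4x4} already yields the bound four from the generic degree of $P$, and it is only nilpotency---entering through $\det A=0$ and the vanishing of the third symmetric function of the eigenvalues---that removes the two top harmonics and sharpens the count to two. A secondary point requiring care is the justification that a flat portion forces a two-dimensional top eigenspace, together with the verification that the $z^7$ (equivalently $z^1$) coefficient really is proportional to the sum of $3\times 3$ principal minors of $A^*$; both are short but must be carried out cleanly.
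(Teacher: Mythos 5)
The paper does not actually prove Theorem~\ref{GauWuCircular}; it is quoted from Gau and Wu \cite[Theorem~3.4]{GauWu081}, so there is no in-paper argument to compare yours against. On its own terms, your argument is attractive and most of it is sound, provided ``submatrix'' is read as \emph{principal} submatrix (which is how the paper uses the result, e.g.\ in the proof of Proposition~\ref{p:nonpa}); only then is $H_\theta^B$ a compression of $H_\theta$ and Cauchy interlacing applicable. Granting that, the chain ``flat portion $\Rightarrow$ $\mu_1(\theta_0)=\mu_2(\theta_0)$ $\Rightarrow$ (by the sandwich) $\mu_1=\mu_2=r$ $\Rightarrow$ $P(\theta_0)=0$'' is correct, the computation showing that nilpotency kills the $z^0,z^1,z^7,z^8$ coefficients of $z^4P$ (via $e_3(A)=e_4(A)=0$ and their conjugates) is correct, and the parity argument from $P\le 0$ correctly cuts the four zeros-with-multiplicity down to two distinct normals.

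The genuine gap is the degenerate case $P\equiv 0$, which you dismiss with a claim that does not follow: $P\equiv 0$ says only that $r$ is \emph{an} eigenvalue of $H_\theta$ for every $\theta$, not that it is the largest one, so it does not give $h_A\equiv r$ on any arc. Worse, what $P\equiv 0$ does give is that the conic $r^2(u^2+v^2)-w^2$ divides $p_A$ in \eqref{pa}, i.e.\ the boundary generating curve contains the circle $\abs{z}=r$ together with the dual of a second conic; and the convex hull of a circle centered at the origin and a general ellipse can perfectly well have \emph{four} flat portions on its boundary (take the unit circle and an ellipse centered at the origin with semi-axes $2$ and $1/2$: its support function equals $1$ at four angles). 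So the degenerate case cannot simply be waved away --- it is exactly where your count could fail. It can be repaired, but only by feeding nilpotency back in through Kippenhahn's focus theorem: since $p_A(1,i,-z)=\det(A-zI)$ is $z^4$, writing $p_A=(r^2(u^2+v^2)-w^2)\,q(u,v,w)$ and evaluating at $(1,i,-z)$ forces $q=\alpha(u^2+v^2)-w^2$, so $C(A)$ is a pair of concentric circles about the origin, $F(A)$ is a disk, and there are no flat portions at all. With that patch (and the ``principal submatrix'' reading) your proof is complete; as it stands, the case that is hardest is the one treated least carefully.
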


In this paper we  prove the Gau-Wu conjecture. This is done in Section~\ref{s:proof}.
As a natural preliminary step, necessary and sufficient conditions for a $4$-by-$4$ nilpotent matrix to have at least one flat portion on the boundary of its numerical range are derived in Section~\ref{wk}. A special family of nilpotent matrices that is important for the proof of the main theorem is analyzed in Section~\ref{s:alt}. Section~\ref{s:two}  contains necessary and sufficient conditions for a nilpotent matrix to have two parallel flat portions on the boundary of its numerical range. In addition, we  show there that for a nilpotent $4$-by-$4$ matrix $A$ with two non-parallel flat portions on the boundary of $F(A)$ that are on lines equidistant from the origin, these are the only flat portions. The latter result is also used in Section~\ref{s:proof}, where in Theorem~\ref{twoflat} it is shown that if $A$ is a $4$-by-$4$ nilpotent matrix, then $ \partial F(A)$ contains at most 2 flat portions. The proof follows from an analysis of the locations of the singularities of the boundary generating curve \eqref{pa}. In the final Section~\ref{s:5x5} we use Theorem~\ref{twoflat} to tackle the case of
$5$-by-$5$ unitarily reducible matrices.

\section{Matrices with a flat portion on the boundary of their numerical range}\label{wk}

We start with an easy case of unitarily reducible matrices.
\begin{prop}\label{p:red} Let $A$ be an $4$-by-$4$ unitarily reducible nilpotent matrix. Then its numerical range $F(A)$ has at most one flat portion on the boundary.
\end{prop}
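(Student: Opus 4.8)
The plan is to reduce everything to a short list of possible building blocks and then run a case analysis. Since $A$ is unitarily reducible, it is unitarily equivalent to a direct sum $\bigoplus_{j} B_j$ of at least two unitarily irreducible blocks; writing $k_j$ for the size of $B_j$ we have $\sum_j k_j = 4$ with each $k_j \le 3$. From $A^4 = 0$ we get $B_j^4 = 0$, so every block is itself nilpotent. Using that the numerical range of a direct sum is the convex hull of the union, $F(A) = \operatorname{conv}\big(\bigcup_j F(B_j)\big)$, it suffices to control the few block types that can occur.

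I would then record the ingredients. A $1$-by-$1$ nilpotent block is $[0]$ and contributes only the point $\{0\}$. A $2$-by-$2$ nilpotent block is unitarily equivalent to a strictly upper triangular matrix with off-diagonal entry $c$, whose numerical range is the disk centered at the origin of radius $\abs{c}/2$; the essential point is that this disk is centered exactly at $0$. A unitarily irreducible $3$-by-$3$ matrix has at most one flat portion on $\partial F$ by Kippenhahn's original observation \cite{Ki}. Finally, every nilpotent block has $0$ as an eigenvalue, hence $0 \in F(B_j)$ for all $j$.

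The case analysis is driven by one structural remark: a size-$3$ block cannot coexist with a size-$2$ block, because $3+2 > 4$. Hence only two situations arise. If some $B_j$ has size $3$, it is the unique block of size exceeding one and all remaining blocks are $[0]$; since $0 \in F(B_j)$ these point contributions are absorbed and $F(A) = F(B_j)$, which has at most one flat portion. Otherwise every nonzero block is $2$-by-$2$, each $F(B_j)$ is a disk centered at the origin, and the convex hull of concentric disks is just the largest among them, a disk whose boundary is a circle with no flat portions.

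The step I expect to require the most care is ruling out the line segments (``bridges'') that generically appear on the boundary of the convex hull of two convex sets and that could manufacture additional flat portions. These are avoided here precisely because of concentricity: in the all-$2$-by-$2$ case a supporting line of the largest disk meets the inner disks nowhere else, so no connecting segment forms, while in the size-$3$ case no genuine combination of two large blocks ever occurs. Consequently the only flat portion that can survive is one already carried by a single unitarily irreducible $3$-by-$3$ block, which yields the bound of at most one.
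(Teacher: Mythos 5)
Your proof is correct and follows essentially the same route as the paper: decompose $A$ into unitarily irreducible nilpotent blocks, observe that a $3$-by-$3$ block forces the rest to be $[0]$ (so $F(A)=F(B_j)$ has at most one flat portion by the known $3$-by-$3$ result), and that otherwise all nontrivial blocks are $2$-by-$2$ nilpotent, yielding concentric disks centered at the origin whose convex hull is the largest disk. The paper organizes the cases by the number of blocks rather than by the presence of a $3$-by-$3$ block, and cites \cite{KRS} rather than \cite{Ki} for the $3$-by-$3$ fact, but these are cosmetic differences.
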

\begin{proof}
Let $A$ be unitarily similar to a direct sum $B_1\oplus\cdots\oplus B_k$, with $k>1$. The blocks $B_j$ are of course also nilpotent, and the following cases are possible.

{\sl Case 1.} $k=2$. If $B_1$ is a $3$-by-$3$ unitarily irreducible nilpotent matrix and $B_2=[0]$, then $F(A)=F(B_1)$. According to \cite[Theorem~4.1]{KRS}, $F(B_1)$ either has no flat portions on the boundary or exactly one such portion. Thus, so does $F(A)$. If there are two nilpotent $2$-by-$2$ blocks, then the numerical range of each block is a circular disk centered at the origin, and $F(A)$ is the largest of these disks and hence has no flat portion on its boundary.

{\sl Case 2.} $k=3$, that is, $B_1$ is a $2$-by-$2$ unitarily irreducible nilpotent matrix, while $B_2=B_3=[0]$. Then $F(A)=F(B_1)$ is again a circular disk centered at the origin, and there are no flat portions on its boundary.

{\sl Case 3.} $k=4$, implying that $A=0$, and $F(A)=\{0\}$. Hence there are no flat portions. \end{proof}

If $A$ is not supposed to be unitarily reducible, the situation becomes more complicated. Let us establish the criterion for at least one
flat portion to exist on $\partial F(A)$. To this end, some background terminology and information is useful.

First, recall the notion of an {\em exceptional
supporting line} of $F(A)$ which for an arbitrary matrix $A$ was introduced in \cite{LLS13}. Namely, let $\ell_\theta$ be the supporting line of $F(A)$
having slope $-\cot\theta$ and such that $e^{-i\theta}F(A)$ lies to the right of the vertical line $e^{-i\theta}\ell_\theta$. Then this supporting line
is exceptional (and, respectively, $\theta$ is an {\em exceptional angle}) if at least one $z\in\ell_\theta\cap F(A)$ is {\em multiply generated}, that
is, there exist at least two linearly independent unit vectors $x_j$ for which $\scal{Ax_j,x_j}=z$. For a given $A$, the angle $\theta$ is exceptional if and only if the hermitian matrix
$\re (e^{-i\theta}A)$ has a multiple minimal eigenvalue \cite[Theorem 2.1]{LLS13}; denote by $\mathcal L$ the respective eigenspace. The above mentioned
value $z$ is unique if and only if the compression of $\im (e^{-i\theta}A)$ (equivalently: $A$) onto $\mathcal L$ is a scalar multiple of the identity; $z$ is then called a {\em multiply generated round boundary point} of $F(A)$.

On the other hand, all points in the relative interior of a flat portion on the boundary of $F(A)$ are multiply generated. So, flat portions occur only on exceptional supporting lines, and for them to materialize it is necessary and sufficient that the the compression $A|{\mathcal L}$  of $A$ onto $\mathcal L$ is {\em not} a scalar multiple of the identity.

In our setting we will have to deal with 2-dimensional $\mathcal L$. The following test is useful in this regard.

\begin{prop}\label{quadformtest} Let $A$ be such that for some $\theta \in [0, 2 \pi)$ there exist two linearly independent vectors $y_1,y_2$ corresponding to the same
eigenvalue $\lambda$ of $\re (e^{-i\theta}A)$, and let ${\mathcal L}=\Span\{y_1,y_2\}$. Then the compression of $A$ onto $\mathcal L$
is a scalar multiple of the identity if and only if
\eq{eqn1evec} \scal{Ay_1,y_1}\norm{y_2}^2=\scal{Ay_2,y_2}\norm{y_1}^2 \en and
\eq{eqn2evec}
\scal{Ay_2,y_1}\norm{y_1}^2=\scal{y_2,y_1}\scal{Ay_1,y_1}. \en
\end{prop}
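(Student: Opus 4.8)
The plan is to translate the statement ``$A|_{\mathcal L}$ is scalar'' into four scalar equations on the spanning set $\{y_1,y_2\}$, and then to show that the eigenvector hypothesis makes one of those four redundant, leaving exactly \eqref{eqn1evec}--\eqref{eqn2evec}.

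First I would record the elementary reduction. Let $P$ be the orthogonal projection onto $\mathcal L$; since $Px=x$ for $x\in\mathcal L$, the compression $PA|_{\mathcal L}$ equals $cI$ for some scalar $c$ exactly when $Ax-cx\perp\mathcal L$ for every $x\in\mathcal L$, i.e. when $\scal{Ax,y}=c\scal{x,y}$ for all $x,y\in\mathcal L$. Expanding over the (possibly non-orthogonal) spanning set $\{y_1,y_2\}$, this is equivalent to the four equations
\[
\scal{Ay_1,y_1}=c\norm{y_1}^2,\qquad \scal{Ay_2,y_2}=c\norm{y_2}^2,
\]
\[
\scal{Ay_2,y_1}=c\scal{y_2,y_1},\qquad \scal{Ay_1,y_2}=c\scal{y_1,y_2}
\]
holding with a common $c$. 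Eliminating $c$ between the first two gives precisely \eqref{eqn1evec}, while combining the first and third gives \eqref{eqn2evec}. Thus the only content beyond \eqref{eqn1evec}--\eqref{eqn2evec} is the last equation $\scal{Ay_1,y_2}=c\scal{y_1,y_2}$, and the crux of the proof is to show it is automatic.

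Next I would extract the algebraic content of the eigenvector hypothesis. Writing $2\re(e^{-i\theta}A)=e^{-i\theta}A+e^{i\theta}A^*$, applying both sides to $y_j$, pairing with $y_k$, and using $\scal{A^*y_j,y_k}=\overline{\scal{Ay_k,y_j}}$, the relation $\re(e^{-i\theta}A)y_j=\lambda y_j$ yields, for all $j,k$,
\[
e^{-i\theta}\scal{Ay_j,y_k}+e^{i\theta}\overline{\scal{Ay_k,y_j}}=2\lambda\scal{y_j,y_k}.
\]
The diagonal case $j=k=1$ gives $\re\bigl(e^{-i\theta}\scal{Ay_1,y_1}\bigr)=\lambda\norm{y_1}^2$, and the off-diagonal case $j=1,\,k=2$ gives the single relation $e^{-i\theta}\scal{Ay_1,y_2}+e^{i\theta}\overline{\scal{Ay_2,y_1}}=2\lambda\scal{y_1,y_2}$ (the case $j=2,\,k=1$ is just its complex conjugate, since $\lambda$ is real).

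Finally, for the forward implication I would assume \eqref{eqn1evec}--\eqref{eqn2evec}, set $c=\scal{Ay_1,y_1}/\norm{y_1}^2$ (legitimate as $y_1\ne0$), and verify directly that the first three of the four equations hold. To recover the fourth, substitute the third equation $\scal{Ay_2,y_1}=c\scal{y_2,y_1}$ into the off-diagonal eigenvector relation and solve for $\scal{Ay_1,y_2}$; this gives $\scal{Ay_1,y_2}=\bigl(2\lambda e^{i\theta}-e^{2i\theta}\bar c\bigr)\scal{y_1,y_2}$, so the desired identity $\scal{Ay_1,y_2}=c\scal{y_1,y_2}$ holds iff $e^{-i\theta}c+e^{i\theta}\bar c=2\lambda$, i.e. iff $\re(e^{-i\theta}c)=\lambda$. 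But this is exactly the diagonal relation divided by $\norm{y_1}^2$, hence true. The reverse implication is immediate, since if $A|_{\mathcal L}=cI$ all four equations hold and \eqref{eqn1evec}--\eqref{eqn2evec} follow at once. The main obstacle is precisely this closing step: a priori the fourth equation is independent of the other three, and it is only the eigenvector hypothesis---through the off-diagonal relation together with $\re(e^{-i\theta}c)=\lambda$---that renders it redundant.
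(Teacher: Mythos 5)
Your proof is correct, but it takes a genuinely different route from the paper's. The paper first normalizes $y_1,y_2$, rewrites \eqref{eqn2evec} as $\scal{A\widetilde{y}_2,y_1}=0$ for the orthogonalized vector $\widetilde{y}_2$, and then argues conceptually: since $\re(e^{-i\theta}A)|{\mathcal L}=\lambda I$ commutes with $\im(e^{-i\theta}A)|{\mathcal L}$, the compression $A|{\mathcal L}$ is normal; a $2$-by-$2$ normal matrix with one vanishing off-diagonal entry is diagonal, so $y_1$ is an eigenvector with eigenvalue $\mu=\scal{Ay_1,y_1}$, which is an endpoint of the segment $F(A|{\mathcal L})$; and \eqref{eqn1evec} forces that extreme value to be attained at two linearly independent unit vectors, which happens only for $\mu I$. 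You instead reduce ``the compression is scalar'' to four Gram-type identities with a common $c$, note that \eqref{eqn1evec}--\eqref{eqn2evec} are exactly what remains after eliminating $c$ from three of them, and recover the fourth, $\scal{Ay_1,y_2}=c\scal{y_1,y_2}$, from the sesquilinear form of the eigenvector relation combined with the diagonal identity $\re(e^{-i\theta}c)=\lambda$. Your argument is more elementary and self-contained --- it uses neither normality nor the fact that an extreme point of the numerical range of a normal matrix is multiply generated only by a scalar compression --- and it isolates exactly where the eigenvector hypothesis enters, namely in supplying the one missing off-diagonal equation. The paper's version is shorter once those standard facts are granted and explains structurally why only two of the four conditions need to be imposed. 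One cosmetic point: your closing ``iff'' is literally an equivalence only when $\scal{y_1,y_2}\neq 0$; when $\scal{y_1,y_2}=0$ the fourth identity holds trivially from your displayed relation, so the conclusion stands in either case.
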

\begin{proof} Without loss of generality we may normalize the vectors $y_1,y_2$, dividing each of them by its length, and thus rewrite \eqref{eqn1evec},
\eqref{eqn2evec} in a slightly simpler form
\eq{1evec} \scal{Ay_1,y_1}=\scal{Ay_2,y_2}, \en
\eq{2evec}
\scal{Ay_2,y_1}=\scal{y_2,y_1}\scal{Ay_1,y_1}. \en
Observe also that \eqref{2evec} means exactly that \eq{3evec} \scal{A\widetilde{y}_2,y_1}=0,\en  where $\widetilde{y}_2$ is a unit vector in $\mathcal L$ orthogonal to $y_1$.
So, we just need to show that $A|{\mathcal L}$ is a scalar multiple of the identity if and only if \eqref{1evec} and \eqref{3evec} hold.

The {\sl necessity} of \eqref{1evec}, \eqref{3evec} is trivial, and even holds for an arbitrary subspace $\mathcal L$, not consisting of eigenvectors of $\re (e^{-i\theta}A)$. As for their
{\sl sufficiency}, note that $\re (e^{-i\theta}A)|{\mathcal L}$, being a scalar multiple of the identity, commutes with $\im (e^{-i\theta}A)|{\mathcal L}$. Thus, $A|{\mathcal L}$ is normal.
As such, condition \eqref{3evec} implies that the matrix of $A|{\mathcal L}$ with respect to the orthonormal basis $\{y_1,\widetilde{y}_2\}$ is diagonal. Consequently, $y_1$ is an eigenvector
of $A|{\mathcal L}$ corresponding to its eigenvalue $\mu=\scal{Ay_1,y_1}$, and the latter is an endpoint of $F(A|{\mathcal L})$. On the other hand, \eqref{1evec} shows that this value
is attained at two linearly independent unit vectors, $y_1$ and $y_2$. This is only possible if $A|{\mathcal L}=\mu I$. \end{proof}

We return now to the nilpotent matrix setting. Let us first establish the criterion for an exceptional supporting line to exist, independent of whether or not it contains a proper flat portion.

\begin{theorem}\label{th:except}Let $A$ be a $4$-by-$4$ nilpotent matrix. Then $F(A)$ has an exceptional supporting line if and only if $A$
is unitarily similar to
\eq{Ae} \alpha\begin{bmatrix}0 & a_1 & a_2 & a_3 \\ & 0 & a_4 & a_5 \\ & & 0 & a_6 \\ & & & 0 \end{bmatrix}, \en
where \eq{aj} \alpha\in\C, \quad \abs{a_j}\leq 1 \text{ for } j=1,2,3,\en  \begin{align}\label{condex1} \abs{a_4-\overline{a_1}a_2}^2 &
= (1-\abs{a_1}^2)(1-\abs{a_2}^2),\\ \label{condex2}
\abs{a_5-\overline{a_1}a_3}^2 &
= (1-\abs{a_1}^2)(1-\abs{a_3}^2),\\
\abs{a_6-\overline{a_2}a_3}^2 &
= (1-\abs{a_2}^2)(1-\abs{a_3}^2), \label{condex3} \end{align} and \eq{arg} \arg(a_6-\overline{a_2}a_3)=\arg(a_5-\overline{a_1}a_3)-\arg(a_4-\overline{a_1}a_2)
\mod 2\pi. \en  \end{theorem}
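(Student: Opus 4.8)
The plan is to reduce the statement to a positive semidefiniteness problem about a single Hermitian matrix and then to recognize that matrix as a Gram matrix. First I would use that the existence of an exceptional supporting line is invariant under unitary similarity, under multiplication by unimodular scalars, and under scaling by positive reals. By the criterion recalled before the theorem, $F(A)$ has an exceptional supporting line exactly when $\re(e^{-i\theta}A)$ has a repeated minimal eigenvalue for some $\theta$. Setting $B=e^{-i\theta}A$ (still nilpotent) and passing to a Schur strictly upper triangular form by a unitary, $\re B$ has zero diagonal, hence trace $0$; so its minimal eigenvalue $\mu$ is negative unless $\re B=0$, which for a skew-Hermitian nilpotent matrix forces $B=0$ and $A=0$. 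Excluding this trivial case, I would rescale by $-1/(2\mu)>0$ to obtain a strictly upper triangular $M$ with $\re M$ having minimal eigenvalue exactly $-\tfrac12$ of multiplicity $\ge 2$, absorbing the rotation $e^{i\theta}$ and the positive scale into a single constant $\alpha$. This turns the theorem into the claim that a strictly upper triangular $M$ with entries $a_1,\dots,a_6$ satisfies ``$\re M$ has minimal eigenvalue $-\tfrac12$ with multiplicity $\ge 2$'' if and only if \eqref{aj} and \eqref{condex1}--\eqref{arg} hold.

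The crux is the reformulation $G:=2\re M+I=M+M^*+I\succeq 0$ with $\rank G\le 2$, which is equivalent to the eigenvalue condition since the multiplicity of $-\tfrac12$ equals $4-\rank G$ and $G\succeq0$ says $-\tfrac12$ is the minimum. Now $G$ is Hermitian with all diagonal entries $1$ and above-diagonal entries $a_1,\dots,a_6$, and such a $G$ is positive semidefinite of rank $\le 2$ if and only if it is the Gram matrix of four unit vectors $v_1,\dots,v_4$ in $\C^2$. Recognizing $G$ as such a Gram matrix is the heart of the argument.

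Given the vectors, let $P=v_1v_1^*$ be the projection onto $\C v_1$, so $I-P$ projects onto the \emph{one-dimensional} space $v_1^{\perp}\subset\C^2$. A short computation identifies $a_4-\overline{a_1}a_2$, $a_5-\overline{a_1}a_3$, and $a_6-\overline{a_2}a_3$ with the three pairwise inner products of $(I-P)v_2,(I-P)v_3,(I-P)v_4$. Because these three vectors are collinear (they live in a one-dimensional space), Cauchy--Schwarz holds with equality, giving exactly the three modulus identities \eqref{condex1}--\eqref{condex3} via $\norm{(I-P)v_j}^2=1-\abs{\scal{v_1,v_j}}^2$; and writing the three projected vectors as scalar multiples of one fixed unit vector in $v_1^{\perp}$ turns the additivity of their arguments into the phase relation \eqref{arg}. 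Together with $\abs{a_j}=\abs{\scal{v_1,v_{j+1}}}\le1$ this establishes the necessity of \eqref{aj} and \eqref{condex1}--\eqref{arg}.

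For the converse I would build the vectors by hand: take $v_1=e_1$ and $v_{j+1}=(\overline{a_j},\,\sqrt{1-\abs{a_j}^2}\,e^{i\gamma_{j+1}})$ for $j=1,2,3$, so that the inner products with $v_1$ are automatically $a_1,a_2,a_3$. The modulus conditions \eqref{condex1}--\eqref{condex3} guarantee that the phases $\gamma_2,\gamma_3,\gamma_4$ can be chosen to make the remaining three inner products equal $a_4,a_5,a_6$, and condition \eqref{arg} is precisely the consistency relation that lets these phase requirements be met simultaneously; the Gram matrix of the resulting $v_j$ is then $G$, which is therefore positive semidefinite of rank $\le 2$. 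Finally $\re M$ having a repeated minimal eigenvalue makes $\theta=\arg\alpha$ an exceptional angle for $\alpha M$, which closes the equivalence. I expect the only delicate point to be this last construction: the phases are pinned down solely through \eqref{arg}, and the degenerate cases $\abs{a_j}=1$ (where a projected vector vanishes and the argument of the corresponding difference is undefined) must be checked separately, verifying that there the conditions collapse to identities the construction still satisfies. This is phase bookkeeping rather than a conceptual obstacle.
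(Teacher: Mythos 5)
Your proof is correct, and while it shares the paper's skeleton---after disposing of $A=0$ and the $d=0$ case exactly as the paper does, everything reduces to the statement that $A+A^*+I$ is positive semidefinite of rank at most $2$ for a suitably normalized strictly upper triangular representative---the way you extract conditions \eqref{aj}--\eqref{arg} from that statement is genuinely different. The paper proceeds by two congruences: it reduces \eqref{AA*} to $[1]\oplus G$ with $G$ the $3$-by-$3$ Schur complement \eqref{G}, reads \eqref{condex1}--\eqref{condex3} off the vanishing of the $2$-by-$2$ principal minors of a rank-one positive semidefinite $G$, and obtains \eqref{arg} from the explicit condition $w=0$ after the second congruence \eqref{H3}. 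You instead factor the $4$-by-$4$ matrix as the Gram matrix of unit vectors $v_1,\dots,v_4\in\C^2$ and observe that $a_4-\overline{a_1}a_2$, $a_5-\overline{a_1}a_3$, $a_6-\overline{a_2}a_3$ are the pairwise inner products of the projections of $v_2,v_3,v_4$ onto the line $v_1^{\perp}$; collinearity of three vectors in a one-dimensional space gives the three Cauchy--Schwarz equalities, and additivity of their phases gives \eqref{arg}. These are two languages for the same algebra (your Gram matrix of the projected vectors \emph{is} the paper's Schur complement $G$), but your version makes the phase condition \eqref{arg} conceptually transparent rather than the output of a computation, and your sufficiency direction is a clean explicit construction of the four vectors rather than a reversal of the congruence argument. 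The one place requiring care, which you correctly flag, is the degenerate case $\abs{a_j}=1$, where a projected vector vanishes; there \eqref{arg} is vacuous by the paper's stated convention and your construction goes through with the corresponding phase left free.
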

Note that all three arguments in \eqref{arg} are defined only if the inequalities in \eqref{aj} are strict. If this is not the case, we agree by convention that condition \eqref{arg} is vacuous.

\begin{proof}The result obviously holds for $A=0$. Indeed, then $A$ is in the form \eqref{Ae}, and every supporting line of $F(A)=\{0\}$ is exceptional.
So, in what follows we will suppose that $A\neq 0$.

{\sl Necessity.}  Suppose $A\neq 0$ is nilpotent, and (at least) one of the supporting lines of $F(A)$ is exceptional. Multiplying $A$
by a unimodular scalar if needed, we may without loss of generality suppose in addition that the exceptional supporting line is vertical. Let $d(\geq 0)$ be its
distance from the origin. Then $\re A+dI$ is a positive semi definite matrix with rank at most~2.

If $d=0$, then $\re A$ is positive semidefinite with zero trace, and thus zero diagonal. This is only possible if $\re A=0$. But then $\im A$
differs from $A$ by a scalar multiply only, and is therefore nilpotent along with $A$. Being hermitian, it is also zero. We arrive at a contradiction
with $A$ being non-zero, implying that $d>0$. Multiplying $A$ by another scalar, this time positive, we may without loss of generality suppose that
$d=1/2$, that is, $A+A^*+I$ is positive semi definite of rank at most~2. We will show that for such matrices the statement holds with $\alpha=1$.

To this end, use unitary similarity to put $A$ in upper triangular form \eqref{Ae} with $\alpha=1$, and observe that then
\eq{AA*} A+A^*+I=\begin{bmatrix} 1 & a_1 & a_2 & a_3 \\ \overline{a_1} & 1  & a_4 & a_5 \\ \overline{a_2 }& \overline{a_4} & 1 & a_6 \\
\overline{a_3} & \overline{a_5 } & \overline{a_6 } & 1 \end{bmatrix}. \en
The matrix in the right hand side of \eqref{AA*} is congruent to $[1]\oplus G$, where \eq{G} G=
\begin{bmatrix} 1-\abs{a_1}^2 & a_4-\overline{a_1}a_2 & a_5-\overline{a_1}a_3 \\
\overline{a_4}-a_1\overline{a_2} &  1-\abs{a_2}^2 & a_6-\overline{a_2}a_3 \\
\overline{a_5}-a_1\overline{a_3} & \overline{a_6}-a_2\overline{a_3} & 1-\abs{a_3}^2\end{bmatrix}.  \en
So, $G$ must be positive semi definite of rank at most 1. The former property implies the inequalities in \eqref{aj}, while due to the latter
the three $2$-by-$2$ principal minors of $G$ are equal to zero. This is equivalent to \eqref{condex1}--\eqref{condex3}.
In its turn, if $\abs{a_1}<1$, then due to \eqref{condex1}, \eqref{condex2} $G$ is congruent to
\eq{H3} [1-\abs{a_1}^2]\oplus \begin{bmatrix} 0 & w \\ \overline{w} & 0\end{bmatrix}, \en  where
\[ w= a_6-\overline{a_2}a_3-\frac{(a_5-\overline{a_1}a_3)(\overline{a_4}-a_1\overline{a_2})}{1-\abs{a_1}^2}. \]
So, in this case \[ \rank G = \begin{cases} 1 & \text{ if } w=0, \\ 3 & \text{ otherwise},\end{cases} \] which implies \eqref{arg}.

{\sl Sufficiency.} Without loss of generality, let $A$ be given by \eqref{Ae} with $\alpha=1$. Then \eqref{AA*} and \eqref{G} hold.

If $\abs{a_1}=1$, then \eqref{condex1}, \eqref{condex2} imply
\eq{H2} G=[0]\oplus\begin{bmatrix}
1-\abs{a_2}^2 & a_6-\overline{a_2}a_3 \\ \overline{a_6}-a_2\overline{a_3} & 1-\abs{a_3}^2\end{bmatrix}, \en
and the second summand in \eqref{H2} is singular due to \eqref{condex3}. So, the minimal eigenvalue $-1/2$ of $\re A$ has multiplicity 2.
For the case $\abs{a_1}<1$, the same conclusion follows from the congruence of $G$ and \eqref{H3}, since \eqref{condex1}--\eqref{arg}
imply $w=0$.  \end{proof}

Note that the exceptional supporting line $\ell$, the existence of which is established by Theorem~\ref{th:except}, is the vertical line $x=-1/2$ scaled by $\alpha$. Consequently, $\ell$ is at the distance
$\abs{\alpha}/2$ from the origin, and has the slope $-\cot\arg\alpha$.

Also, conditions \eqref{condex1}--\eqref{condex3} can be rewritten in an equivalent form
\[ a_4= \overline{a_1}a_2+r_1r_2e^{i\theta_1},\quad   a_5= \overline{a_1}a_3+r_1r_3e^{i\theta_2},\quad  a_6= \overline{a_2}a_3+r_2r_3e^{i\theta_3}, \]
where \[ r_j=\sqrt{1-\abs{a_j}^2}, \quad j=1,2,3,\]  and
\eq{theta} \theta_1=\arg(a_4-\overline{a_1}a_2), \quad \theta_2=\arg(a_5-\overline{a_1}a_3), \quad  \theta_3=\arg(a_6-\overline{a_2}a_3). \en
Consequently, the matrix \eqref{Ae} that satisfies the conditions in Theorem \ref{th:except} can be represented more explicitly as \eq{Ae1}
\alpha\begin{bmatrix}0 & a_1 & a_2 & a_3 \\ & 0 & \overline{a_1}a_2+r_1r_2e^{i\theta_1} & \overline{a_1}a_3+r_1r_3e^{i\theta_2} \\ &  & 0 & \overline{a_2}a_3+r_2r_3e^{i\theta_3} \\ & & & 0 \end{bmatrix}, \en
where $\theta_3=\theta_2-\theta_1.$
We will now use Proposition~\ref{quadformtest} to establish the additional conditions on $A$ under which a flat portion of $F(A)$ on $\ell$ actually materializes.

\begin{theorem}\label{oneflat}Let $A$ be unitarily similar to \eqref{Ae1}. In the notation introduced above, $F(A)\cap\ell$ is a proper line segment unless one of the following four conditions holds.

\smallskip
(i) $r_1 r_2 r_3 \ne 0$ and $\tau_1=\tau_2=0$, where
\eq{bdryval}
\begin{array}{l c l}
\tau_1= r_3(r_1^2+r_2^2-r_1^2r_2^2)(\overline{a}_1 a_3 e^{-i \theta_2}-a_1 \overline{a}_3 e^{i \theta_2})&+&r_2(r_1^2+r_3^2-r_1^2r_3^2)(a_1 \overline{a}_2e^{i \theta_1}-\overline{a}_1 a_2 e^{-i \theta_1}) \\ &+&r_1r_2 r_3 |a_1|^2 (a_2 \overline{a}_3 e^{i (\theta_2-\theta_1)} -\overline{a}_2 a_3 e^{-i (\theta_2-\theta_1)}),
\end{array}
\en
and
\eq{mixedterm}
\begin{array}{l c l}
\tau_2=\overline{a}_2a_3r_1(r_1^2+2r_2^2-2r_1^2r_2^2)-a_1\overline{a}_2^2a_3r_1^2r_2e^{i \theta_1}+\overline{a}_1 a_3r_2(-r_2^2-r_1^2+r_1^2r_2^2)e^{-i \theta_1}\\+a_1 \overline{a}_2r_1^2r_3|a_2|^2e^{i \theta_2} +r_1r_2r_3(1-2|a_1|^2 |a_2|^2)e^{i(\theta_2-\theta_1)}+\overline{a}_1a_2|a_1|^2r_2^2r_3e^{i(\theta_2-2\theta_1)}.\end{array}
 \en

\smallskip
 (ii) $r_1=0$, $r_2=r_3 \ne 0$, and $\arg(a_3)=\arg(a_2)+ \theta_3.$

 \smallskip
 (iii) $r_2=0$, $r_1=r_3 \ne 0$, and $\arg(a_3)=\arg(a_1)+ \theta_2+\pi.$

 \smallskip
 (iv) $r_3=0$, $r_1=r_2 \ne 0$, and $\arg(a_2)=\arg(a_1)+ \theta_1.$
\end{theorem}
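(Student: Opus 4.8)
The plan is to decide, via the criterion stated just before this theorem (and made computational by Proposition~\ref{quadformtest}), whether the compression $A|{\mathcal L}$ onto the minimal--eigenvalue eigenspace $\mathcal L$ of $\re A$ is a scalar multiple of the identity: a proper flat portion appears on $\ell$ exactly when it is not. Since conditions (i)--(iv) are unchanged when $A$ is multiplied by a nonzero complex scalar, I would first normalize to $\alpha=1$, so that by the proof of Theorem~\ref{th:except} the space $\mathcal L=\ker(A+A^*+I)$ is two--dimensional and $\re A|{\mathcal L}=-\tfrac12 I$. This last fact is the key simplification: it forces $\re\scal{Ay_j,y_j}=-\tfrac12\norm{y_j}^2$ and $\scal{(\re A)y_2,y_1}=-\tfrac12\scal{y_2,y_1}$, so the real parts of \eqref{eqn1evec} and \eqref{eqn2evec} hold automatically. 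Consequently \eqref{eqn1evec} collapses to the single real equation $\im\scal{Ay_1,y_1}\,\norm{y_2}^2=\im\scal{Ay_2,y_2}\,\norm{y_1}^2$, while \eqref{eqn2evec} collapses to $\norm{y_1}^2\scal{(\im A)y_2,y_1}=\scal{y_2,y_1}\scal{(\im A)y_1,y_1}$, i.e. both reduce to the requirement that only $\im A|{\mathcal L}$ be scalar.

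The heart of the matter is case~(i), $r_1r_2r_3\neq0$. Solving $(A+A^*+I)x=0$, I would use row~$1$ together with the relation $r_1x_2+r_2e^{i\theta_1}x_3+r_3e^{i\theta_2}x_4=0$ obtained after eliminating $x_1$ to express $x_1,x_2$ through the free parameters $x_3,x_4$; choosing $(x_3,x_4)=(r_1,0)$ and $(0,r_1)$ yields the basis $y_1=(a_1r_2e^{i\theta_1}-a_2r_1,\,-r_2e^{i\theta_1},\,r_1,\,0)$ and $y_2=(a_1r_3e^{i\theta_2}-a_3r_1,\,-r_3e^{i\theta_2},\,0,\,r_1)$. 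A short verification shows that rows~$3$ and~$4$ then vanish precisely because of \eqref{condex1}--\eqref{condex3} and the phase relation $\theta_3=\theta_2-\theta_1$ from \eqref{arg}: the row--$4$ residue equals $r_1r_2r_3\bigl(e^{-i\theta_3}-e^{i(\theta_1-\theta_2)}\bigr)$, which cancels exactly under \eqref{arg}. Substituting $y_1,y_2$ into the two reduced conditions and discarding a nonzero common factor should convert \eqref{eqn1evec} into $\tau_1=0$ and \eqref{eqn2evec} into $\tau_2=0$, respectively. I expect this last step to be the main obstacle: forming $\scal{Ay_i,y_j}$, $\scal{y_2,y_1}$, and $\norm{y_i}^2$ and collecting terms into the compact shapes \eqref{bdryval} and \eqref{mixedterm} is entirely mechanical but long, and the real work lies in the bookkeeping that matches the expansion to $\tau_1,\tau_2$.

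For the singly--degenerate cases one $r_j$ vanishes and the kernel, still two--dimensional, takes a simpler shape. When $r_1=0$ (so $\abs{a_1}=1$, $a_4=\overline{a_1}a_2$, $a_5=\overline{a_1}a_3$) and $r_2,r_3\neq0$, I would take $y_1=(-a_1,1,0,0)$ and $y_2=(a_2r_3e^{i\theta_3}-a_3r_2,\,0,\,-r_3e^{i\theta_3},\,r_2)$. Here $\scal{Ay_1,y_1}=-1$ is real and $\scal{Ay_2,y_1}=0$, so the reduced \eqref{eqn1evec} becomes $\im\scal{Ay_2,y_2}=0$ while the reduced \eqref{eqn2evec} becomes $a_2r_3e^{i\theta_3}=a_3r_2$; comparing moduli forces $\abs{a_2}=\abs{a_3}$, hence $r_2=r_3$, and comparing arguments gives $\arg a_3=\arg a_2+\theta_3$, which is exactly~(ii) (the first condition then being automatic), and a flat portion is present otherwise. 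Cases~(iv) ($r_3=0$) and~(iii) ($r_2=0$) are treated by the same scheme with the analogous explicit kernels; (iv) mirrors~(ii) under the order--reversing symmetry with $F(A^{T})=F(A)$, while the middle case~(iii) is where the phase bookkeeping produces the extra $\pi$ in $\arg a_3=\arg a_1+\theta_2+\pi$. Finally, the finitely many configurations with two or three of the $r_j$ equal to zero lie outside (i)--(iv), and a direct inspection disposes of them: a flat portion is always present there, the only exception being the unitarily reducible ``disk'' configuration $\abs{a_1}=1,\ a_2=a_3=0$ and its permutations, for which the convention declaring \eqref{arg} vacuous correctly records the absence of a flat portion.
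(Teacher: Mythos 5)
Your strategy is the paper's strategy: normalize to $\alpha=1$, exhibit a basis $y_1,y_2$ of the eigenspace of $\re A$ for the double eigenvalue $-\tfrac12$ separately in the cases $r_1r_2r_3\neq 0$, exactly one $r_j=0$, and at least two $r_j=0$ (your vectors are exactly the ones the paper uses), and then test scalarity of the compression via Proposition~\ref{quadformtest}. Your preliminary remark that $\re A|{\mathcal L}=-\tfrac12 I$ makes the $\re A$--contributions to \eqref{eqn1evec} and \eqref{eqn2evec} cancel identically is correct and is a genuine (if modest) streamlining the paper does not spell out. The problem is that in case (i) you stop at ``substituting $y_1,y_2$ \dots\ \emph{should} convert \eqref{eqn1evec} into $\tau_1=0$ and \eqref{eqn2evec} into $\tau_2=0$.'' Since the statement of the theorem \emph{is} the explicit expressions \eqref{bdryval} and \eqref{mixedterm}, the computation of $\norm{y_j}^2$, $\scal{Ay_j,y_j}$, $\scal{y_2,y_1}$, $\scal{Ay_2,y_1}$ and the identification of the two resulting combinations as $r_1\tau_1$ and $r_1\tau_2$ is the entire content of that case; deferring it as ``mechanical bookkeeping'' leaves the main assertion unverified.

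Your final paragraph also misfiles the degenerate configurations. If two or three of the $r_j$ vanish, then two or three of the $\abs{a_j}$ equal $1$, and a flat portion is \emph{always} present, with no exception: e.g.\ for $r_1=r_2=0$ one has $\scal{Ay_2,y_1}\norm{y_1}^2=-2a_2\overline{a_1}\neq -a_2\overline{a_1}=\scal{y_2,y_1}\scal{Ay_1,y_1}$ precisely because $\abs{a_1}=\abs{a_2}=1$ forces $a_1,a_2\neq 0$. The ``disk'' configuration $\abs{a_1}=1$, $a_2=a_3=0$ that you cite as the exception has $r_2=r_3=1\neq 0$, so it is not a doubly degenerate case at all; it sits inside your $r_1=0$ analysis, where the criterion $a_2r_3e^{i\theta_3}=a_3r_2$ holds trivially as $0=0$ and the argument condition in (ii) is vacuous. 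So as written your case analysis asserts a false exception in the ``two or three $r_j=0$'' regime and relies, without saying so, on condition (ii) absorbing $a_2=a_3=0$.
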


\begin{proof} Without loss of generality we may suppose that $A$ is in the form \eqref{Ae1}, not just unitarily similar to it, and that $\alpha=1$. Then

$$\re A=\frac{1}{2}\left(\begin{array}{cccc}0 & a_1 & a_2 & a_3 \\\overline{a}_1 & 0 & \overline{a}_1a_2 +r_1r_2e^{i \theta_1}& \overline{a}_1a_3 +r_1r_3e^{i \theta_2} \\\overline{a}_2 & a_1\overline{a}_2+r_1r_2e^{-i \theta_1} & 0 &  \overline{a}_2a_3 +r_2r_3e^{i \theta_3}\\\overline{a}_3 & a_1 \overline{a}_3 +r_1r_3e^{-i \theta_2}& a_2 \overline{a}_3+r_2r_3e^{-i\theta_3} & 0\end{array}\right).$$

\smallskip
{\sl Case 1.} Let $r_1r_2r_3 \ne 0$. It is straightforward to check that the minimal eigenvalue $\lambda=-\frac{1}{2}$ of $\re A$ has multiplicity 2, and
\[y_1=(-a_2r_1+a_1r_2e^{i \theta_1},-r_2e^{i \theta_1},r_1,0), \quad y_2=(-a_3r_1+a_1r_3e^{i \theta_2},-r_3e^{i \theta_2},0,r_1)\] form a basis of the respective eigenspace. Moreover, $$Ay_1=(-a_1r_2e^{i \theta_1}+a_2r_1,r_1\overline{a}_1a_2+r_1^2r_2e^{i \theta_1},0,0)$$ and $$Ay_2=(-a_1r_3e^{i \theta_2}+a_3r_1,r_1\overline{a}_1a_3+r_1^2r_3e^{i \theta_2},r_1\overline{a}_2a_3+r_1r_2r_3e^{i (\theta_2-\theta_1)},0).$$  Therefore

\begin{align*}
\|y_1 \|^2&=2r_1^2+2r_2^2-2r_1^2r_2^2-a_1\overline{a}_2r_1r_2e^{i \theta_1}-\overline{a}_1a_2r_1r_2e^{-i \theta_1}, \\
\|y_2 \|^2&=2r_1^2+2r_3^2-2r_1^2r_3^2-a_1\overline{a}_3r_1r_3e^{i \theta_2}-\overline{a}_1a_3r_1r_3e^{-i \theta_2},  \\
\langle Ay_1, y_1 \rangle &=-r_1^2-r_2^2+r_1^2r_2^2+a_1\overline{a}_2r_1r_2e^{i \theta_1}, \\
\langle A y_2,y_2 \rangle & =-r_1^2-r_3^2+r_1^2r_3^2+a_1\overline{a}_3r_1r_3e^{i \theta_2}.
\end{align*}

After some simplification, $\|y_2 \|^2 \langle Ay_1, y_1 \rangle-\|y_1 \|^2 \langle Ay_2, y_2 \rangle$ becomes $r_1\tau_1$ with $\tau_1$ defined by \eqref{bdryval}. Hence condition \eqref{eqn1evec} is satisfied
if and only if $\tau_1=0$.

Next note that 

$$\langle y_2, y_1 \rangle = r_1^2a_3 \overline{a}_2-\overline{a}_2a_1r_1r_3e^{i\theta_2}-a_3 \overline{a}_1 r_1r_2e^{-i \theta_1}+(1+|a_1|^2)r_2r_3 e^{i(\theta_2-\theta_1)},$$

and

$$\langle Ay_2, y_1 \rangle =a_1r_3e^{i \theta_2} (\overline{a}_2r_1-\overline{a}_1 r_2e^{-i \theta_1}).$$

Now $\|y_1 \|^2 \langle Ay_2, y_1 \rangle-\langle y_2, y_1 \rangle \langle A y_1, y_1 \rangle$ simplifies to $r_1\tau_2$, so \eqref{eqn2evec}
holds exactly when $\tau_2=0$.

Therefore, by Proposition~\ref{quadformtest}, the line $x=-\frac{1}{2}$ will contain a proper line segment of $F(A)$ if and only if at least one of  $\tau_1$ or $\tau_2$ is nonzero. This agrees with the statement of the theorem.

{\sl Case 2.} At least two of $r_j$ are equal to zero, $j=1,2,3$. To be consistent with the statement of the theorem, we need to show that $F(A)\cap\ell$ is a proper line segment. But this is indeed so.
For example, if $r_1=r_2=0$, then it immediately follows that the vectors $y_1=(-a_1, 1, 0,0)$ and $y_2=(-a_2, 0, 1, 0)$ are linearly independent eigenvectors of $\re A$ corresponding to $-\frac{1}{2}$. Since $A y_1=(a_1, 0,0,0)$ and $A y_2=(a_2,0,0,0)$, both sides of \eqref{eqn1evec} equal $-2$. However, \eqref{eqn2evec} is not satisfied because

$$\langle A y_2, y_1 \rangle \|y_1 \|^2=-2 a_2 \overline{a}_1 \ne- a_2 \overline{a}_1=\langle y_2, y_1 \rangle \langle A y_1, y_1 \rangle.$$

Therefore, the flat portion will exist. All other cases where at least two $r_j$ values are zero are treated in the same manner.

{\sl Case 3.} Exactly one of $r_j$ is equal to zero. For the sake of definiteness, let $r_1=0$, $r_2r_3\neq 0$.
Then
$y_1=(-a_1, 1,0,0)$ and $y_2=(a_2r_3e^{i\theta_3}-r_2 a_3,0,-r_3e^{i\theta_3},r_2)$ are linearly independent eigenvectors of $\re A$ corresponding to $-\frac{1}{2}$. It still holds that $\|y_1 \|^2=2$ and $\langle A y_1, y_1 \rangle=-1$.  Now we also have

$$\langle y_2, y_1 \rangle=r_2 a_3 \overline{a}_1-a_2 \overline{a}_1 r_3 e^{i\theta_3},$$

$$\|y_2 \|^2=2-\overline{a}_2 a_3 r_2 r_3 e^{-i\theta_3}-a_2 \overline{a}_3 r_2 r_3 e^{i\theta_3}-2|a_2|^2 |a_3|^2.$$

and  $$A y_2=(-a_2r_3e^{i\theta_3}+r_2a_3,-\overline{a}_1a_2r_3e^{i\theta_3}+\overline{a}_1a_3r_2, r_2\overline{a}_2a_3+r_2^2r_3e^{i\theta_3},0).$$

Therefore, we can can compute the remaining quantities from Proposition~\ref{quadformtest}:

$$ \langle A y_2, y_2 \rangle=a_2 \overline{a}_3 r_2 r_3 e^{i\theta_3}-1+|a_2|^2|a_3|^2. $$

and

$$ \langle A y_2, y_1 \rangle=0.$$

Equation \eqref{eqn1evec} holds if and only if $-\|y_2 \|^2=2  \langle A y_2, y_2 \rangle$. Substituting into the latter equation yields

$$ -2+\overline{a}_2 a_3 r_2 r_3 e^{-i\theta_3}+a_2 \overline{a}_3 r_2 r_3 e^{i\theta_3}+2|a_2|^2 |a_3|^2=2a_2 \overline{a}_3 r_2 r_3 e^{i\theta_3}-2+2|a_2|^2|a_3|^2,$$

which simplifies to

\begin{equation}\label{r1zeroeq1}
\overline{a}_2 a_3 r_2 r_3 e^{-i\theta_3}-a_2 \overline{a}_3 r_2 r_3 e^{i\theta_3}=0.
\end{equation}

Since $a_1 \ne 0$, equation \eqref{eqn2evec} holds if and only if

\begin{equation}\label{r1zeroeq2}
r_2 a_3 -a_2  r_3 e^{i\theta_3}=0.
\end{equation}

If equation \eqref{r1zeroeq2} holds, then $r_3=r_2$ and hence $\arg(a_3)=\arg(a_2)+\theta_3.$ This proves the necessity of the conditions in (ii) in order for $F(A)$ to fail to have a flat portion on $x=-\frac{1}{2}$. Conversely, if the conditions in (ii) hold, then clearly \eqref{r1zeroeq1} and \eqref{r1zeroeq2} are true, which results in no flat portion by Proposition~\ref{quadformtest}.
This agrees with (ii) in the statement of the theorem.

The situations when $r_2=0$, $r_1r_3\neq 0$ and $r_3=0, r_1r_2\neq 0$ can be treated similarly. The only difference will be in the specific choice of the eigenvectors, namely, \[ y_1=(-a_2, 0,1,0), \quad  y_2= (-a_3 r_1+r_3a_1e^{i \theta_2},-r_3e^{i \theta_2},0,r_1) \] in the former, and
\[ y_1=(-a_3, 0,0,1), y_2=(-a_2r_1+a_1r_2e^{i \theta_1},-r_2e^{i \theta_1}, r_1,0) \] in the latter. Direct computations show that a flat portion does not materialize if and only if, respectively, (iii) or (iv) holds.
\end{proof}

{Conditions (i)--(iv) of Theorem~\ref{oneflat} simplify somewhat if the entries $a_j$ in \eqref{Ae} are real. Indeed, then $\theta_j=0\mod\pi$, and the argument conditions in (ii)--(iv) boil down to
\[ a_2a_3(a_6-a_2a_3)\geq 0, \quad a_1a_3(a_5-a_1a_3)\leq 0, \text{ and }  a_1a_2(a_4-a_1a_2)\geq 0, \] respectively. In its turn, $\tau_1$ in (i) is zero automatically, while
\eq{realtau2}\begin{array}{lcl} \tau_2= r_1a_2a_3(r_1^2+2r_2^2-2r_1^2r_2^2)-
a_1r_2a_3(r_2^2+2r_1^2-2r_1^2r_2^2)e^{i\theta_1} \\ +
a_1a_2r_3(r_1^2+r_2^2-2r_1^2r_2^2)e^{i\theta_2}+
r_1r_2r_3(1-2a_1^2a_2^2)e^{i\theta_3}.\end{array}\en

\begin{example} Let $A$ be of the form \eqref{Ae} with  $\alpha=1$ and $a_1=\frac{\sqrt{2+\sqrt{3}}}{2}$, $a_2=\frac{1}{2}$, $a_3=\frac{\sqrt{2}}{2}$. Choosing $a_4,a_5,a_6$ in such a way that \eqref{condex1}--\eqref{condex3} hold with $\theta_j=0$ in \eqref{theta}, $j=1,2,3$ yields $a_4=\frac{\sqrt{2}}{2}$, $a_5=\frac{\sqrt{3}}{2}$, and $a_6=\frac{\sqrt{2+\sqrt{3}}}{2}$. A direct substitution into \eqref{realtau2} reveals that $\tau_2=0$.  So, $F(A)$ has no flat portion
on the  supporting line $x=-\frac{1}{2}$ by Theorem~\ref{oneflat} even though this line is exceptional by Theorem~\ref{th:except}.

\end{example}

\begin{figure}[h]
\centering
\includegraphics[scale=.6]{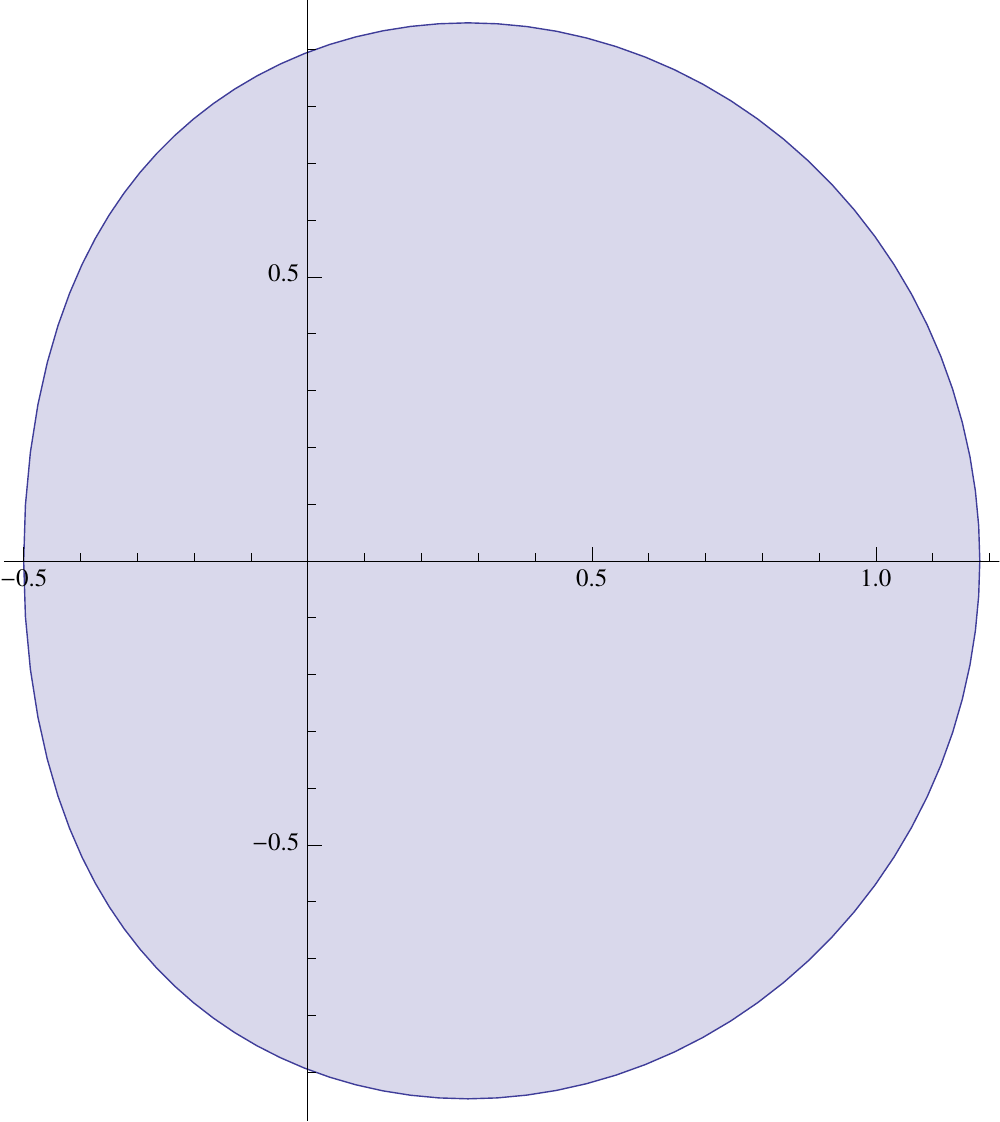}
\caption{$F(A)$ with exceptional support line but no flat portion}\label{noflat}
\end{figure}

\section{Special case: An alternative approach} \label{s:alt}

{As can be seen from the discussion above, Theorem~\ref{oneflat} provides a convenient tool for constructing specific examples of nilpotent matrices $A$ with a prescribed exceptional supporting line $\ell$, with
$F|(A)\cap\ell$ being just one point or a proper line segment. For another example, by setting $a_1=1$, $a_2=\frac{1}{2}$, $a_3=\frac{\sqrt{3}}{2}$, and $\theta_3=0$ in Theorem~\ref{th:except} we immediately obtain a matrix

\begin{equation*}
A=\left(\begin{array}{cccc}0 & 1 & \frac{1}{2} & \frac{\sqrt{3}}{2} \\0 & 0 & \frac{1}{2} & \frac{\sqrt{3}}{2} \\0 & 0 & 0 & \frac{\sqrt{3}}{2} \\0 & 0 & 0 & 0\end{array}\right)
\end{equation*}
that fails to satisfy any of the conditions (i)-(iv) in Theorem~\ref{oneflat} and hence has a flat portion as shown in Figure~\ref{withflat}.

\begin{figure}[h]
\centering
\includegraphics[scale=.6]{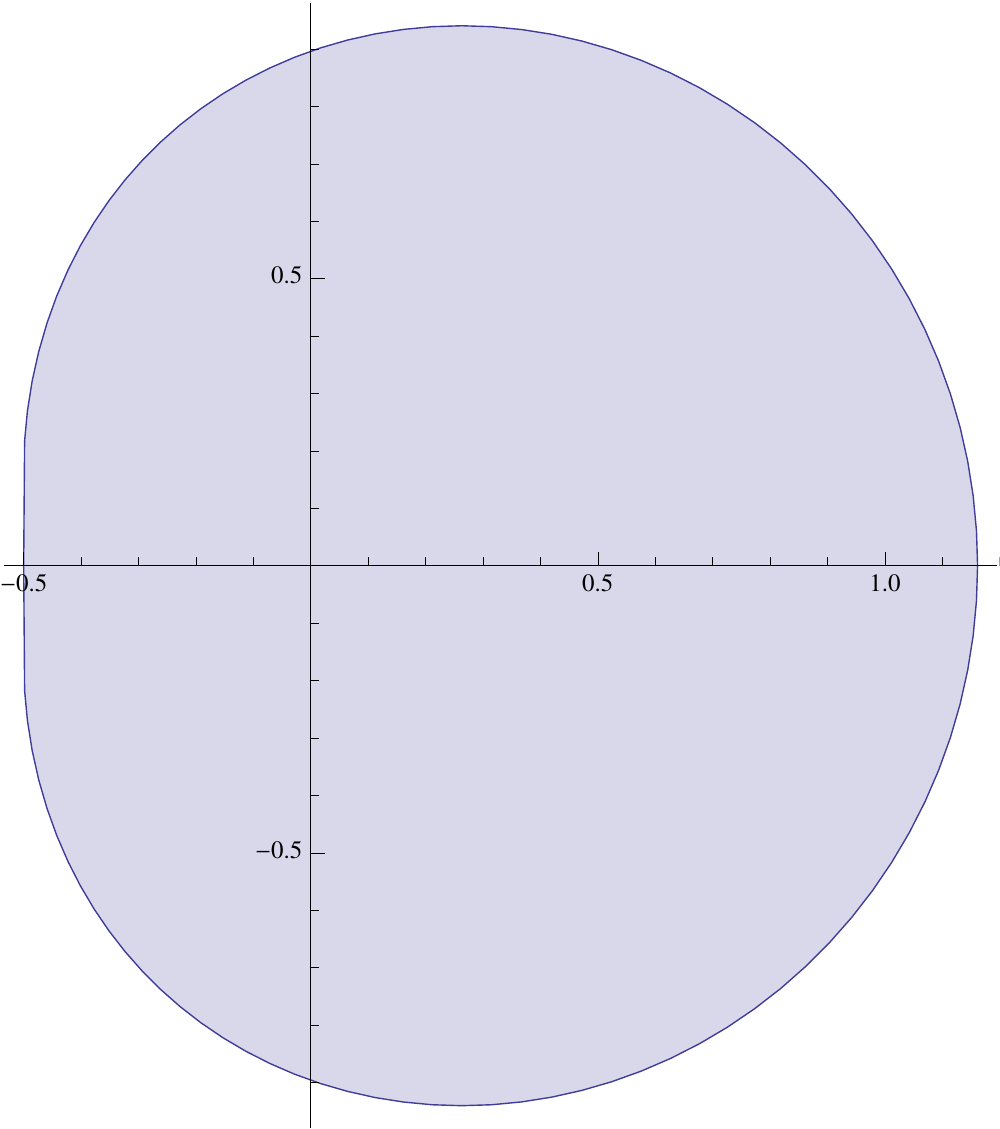}
\caption{$F(A)$ with flat portion}\label{withflat}
\end{figure}

 However, the conditions in Theorem~\ref{oneflat} are not  as useful when considering a given (even triangular) matrix
with the number and orientation of flat portions not known a priori. We present now one such case, to illustrate this point, and also since it plays an important role in Section~\ref{s:proof}.}

\begin{lemma}\label{realA} Let \[   A=\begin{bmatrix}0 & a_{1} & a_2 & a_{3} \\ 0 & 0 & a_{3} & a_2 \\  0 & 0 & 0 & a_{1} \\ 0 & 0 & 0 & 0\end{bmatrix}, \] where $a_1$, $a_2$, and $a_3$ are real and $a_1 \ne 0$. The boundary of $F(A)$ contains a vertical flat portion if and only if $|a_1|=|a_3|$ and $|a_2| \geq |a_1|$. In this case, this is the only flat portion on $\partial F(A)$.
\end{lemma}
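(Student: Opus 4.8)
The plan is to exploit the persymmetry of $A$, namely $JAJ=A^{\top}$ where $J$ is the flip (reversal) matrix, together with the reality of the entries. This forces $H:=\re A$ to be centrosymmetric ($JHJ=H$) and $K:=\im A$ to be anti‑centrosymmetric ($JKJ=-K$). Hence $H$ preserves the two eigenspaces of $J$, namely $\mathcal L_+=\Span\{(1,1,1,1),(1,-1,-1,1)\}$ and $\mathcal L_-=\Span\{(1,1,-1,-1),(1,-1,1,-1)\}$, while $K$ interchanges them. First I would diagonalize $H$ outright: the mutually orthogonal Hadamard‑type vectors $w_1=(1,1,1,1),\ w_2=(1,-1,-1,1),\ w_3=(1,1,-1,-1),\ w_4=(1,-1,1,-1)$ are eigenvectors of $2H$ with eigenvalues $\lambda_1=a_1+a_2+a_3,\ \lambda_2=-a_1-a_2+a_3,\ \lambda_3=a_1-a_2-a_3,\ \lambda_4=-a_1+a_2-a_3$. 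Since $K$ maps $\mathcal L_\pm$ into $\mathcal L_\mp$, all entries $\scal{Kw_i,w_i}$ and all same‑block entries vanish, so the compression of $K$ (hence of $A$) to any subspace of a single $\mathcal L_\pm$ is scalar.

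For the equivalence I would invoke the criterion recorded just before Proposition~\ref{quadformtest}: a vertical flat portion occurs iff the minimal or maximal eigenvalue of $H$ has multiplicity at least two and the compression of $A$ to its eigenspace is not scalar. By the previous paragraph a non‑scalar compression requires the extreme eigenspace to meet both $\mathcal L_+$ and $\mathcal L_-$, i.e. a coincidence $\lambda_i=\lambda_j$ with $i\in\{1,2\},\ j\in\{3,4\}$ whose cross term $\scal{Kw_i,w_j}$ is nonzero. A direct computation shows these cross terms are nonzero scalar multiples of $a_2+a_3,\ a_1,\ a_1,\ a_3-a_2$ for the pairs $(1,3),(1,4),(2,3),(2,4)$. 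The coincidences $\lambda_1=\lambda_3$ and $\lambda_2=\lambda_4$ read $a_2+a_3=0$ and $a_2=a_3$, which annihilate their own cross terms, whereas $\lambda_2=\lambda_3$ and $\lambda_1=\lambda_4$ read $a_1=a_3$ and $a_1=-a_3$ and keep $\scal{Kw_i,w_j}\ne0$ since $a_1\ne0$. Thus a genuine flat portion forces $\abs{a_1}=\abs{a_3}$; normalizing by sign‑change diagonal unitaries to $a_1=a_3=c>0$, the eigenvalues of $2H$ become $\{a_2+2c,\,-a_2,\,-a_2,\,a_2-2c\}$, and the double eigenvalue $-a_2$ is extreme exactly when $\abs{a_2}\ge c$. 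The same‑block cases $\lambda_1=\lambda_2$ or $\lambda_3=\lambda_4$ give scalar compressions and hence no flat portion, so the stated equivalence follows.

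For uniqueness I would keep the normalization $a_1=a_3=c$, under which $A$ is the upper‑triangular Toeplitz matrix $cS+a_2S^2+cS^3$ with $S$ the nilpotent shift. A flat portion in a direction $\theta$ forces $\re(e^{-i\theta}A)$ to have a \emph{repeated extreme} eigenvalue, so I would compute the characteristic polynomial of $2\re(e^{-i\theta}A)$; after a diagonal unitary conjugation it is the depressed quartic $\lambda^4-(4c^2+2a_2^2)\lambda^2-8c^2a_2\cos\theta\,\lambda+\bigl(a_2^4-4a_2^2c^2+4c^4\sin^2\theta\bigr)$. With $s=\cos^2\theta$ its discriminant factors as a negative constant times $(s-1)\bigl(s-s^*\bigr)^2$, where $s^*=(a_2/c)^2\bigl(2-(a_2/c)^2\bigr)$; thus repeated eigenvalues occur only at $s=1$ (the vertical line) or $s=s^*$. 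The value $s^*$ falls in $(0,1)$ only when $c<\abs{a_2}<\sqrt2\,c$, and this is the crux. There I would show that the repeated eigenvalue of $2\re(e^{-i\theta}A)$ equals $-c\sqrt{2-(a_2/c)^2}$ while the remaining two are $c\sqrt{2-(a_2/c)^2}\pm2a_2$; since $\abs{a_2}>c$ the repeated value lies strictly between them, i.e. it is a middle eigenvalue and produces no boundary segment. Hence no non‑vertical flat portion exists, and the vertical one is unique.

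The main obstacle is precisely this last point: for $\abs{a_1}<\abs{a_2}<\sqrt2\,\abs{a_1}$ a second family of exceptional directions $s=s^*$ genuinely appears, so it is not enough to detect repeated eigenvalues—one must certify that the repeated one is never the largest or the smallest. The argument hinges entirely on the clean factorization of the quartic discriminant and on pinning down the repeated root $-c\sqrt{2-(a_2/c)^2}$ explicitly. Everything else is the explicit, centrosymmetry‑driven diagonalization of $\re A$ and the bookkeeping of the cross terms of $\im A$.
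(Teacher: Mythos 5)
Your proposal is correct and follows essentially the same route as the paper: the Hadamard vectors $(\pm1,\pm1,\pm1,\pm1)$ that you extract from centrosymmetry are exactly the eigenvectors the paper uses, your nonvanishing cross terms $\scal{Kw_i,w_j}$ reproduce the paper's inner-product test based on Proposition~\ref{quadformtest}, and your quartic for $2\re(e^{-i\theta}A)$ is the paper's factored characteristic polynomial. The only (cosmetic) divergence is in the uniqueness step, where you locate the exceptional directions via the discriminant and check that the repeated root is a middle eigenvalue (your stated value $-c\sqrt{2-(a_2/c)^2}$ is actually $\pm c\sqrt{2-(a_2/c)^2}$ depending on the branch of $\theta$, but the betweenness argument is unaffected), whereas the paper notes directly that a repeated extreme eigenvalue forces $\lambda_3(\theta)=\lambda_4(\theta)$ and that this is incompatible with $\abs{a_2}\ge\abs{a_1}$ for $\theta\ne 0,\pi$; both arguments rest on the same explicit eigenvalue formulas.
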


\begin{proof} As is well-known (see, for example, \cite{GauWu08}), the boundary $\partial F(A)$ of the numerical range of $A$ contains a portion of the line $\cos(\theta) x+ \sin(\theta) y=d$ if and only if $d$ is the maximum (or minimum) eigenvalue of $\mathrm{Re} \, e^{-i \theta} A$ and there are two eigenvectors $y_1$ and $y_2$ associated with $d$ such that either condition \eqref{eqn1evec} or \eqref{eqn2evec} in Proposition~\ref{quadformtest} fails to hold.

A straightforward calculation shows that the characteristic polynomial of $\mathrm{Re} \, e^{-i \theta} A$ is

 \begin{equation*}
 q_{\theta}(\lambda)=\lambda^4-\frac{1}{2}(a_1^2+a_2^2+a_3^2) \lambda^2-(a_1a_2a_3 \cos\theta ) \lambda+\frac{1}{16}(a_1^4+a_2^4+a_3^4-2a_1^2a_2^2-2a_2^2a_3^2-2a_1^2a_3^2 \cos2 \theta ).
 \end{equation*}

This polynomial factors as

\begin{equation*}
 q_{\theta}(\lambda)=\left(\lambda^2+a_2 \lambda+ \frac{a_1a_3\cos\theta}{2}+\frac{a_2^2}{4}-\frac{a_1^2}{4}-\frac{a_3^2}{4}\right)\left(\lambda^2-a_2 \lambda- \frac{a_1a_3\cos\theta}{2}+\frac{a_2^2}{4}-\frac{a_1^2}{4}-\frac{a_3^2}{4}\right)
\end{equation*}
Therefore the roots of $q_{\theta}$, which are the eigenvalues of $\mathrm{Re} \, e^{-i \theta} A$, can be explicitly calculated as

\begin{equation}\label{evalsrotatedrealA} \begin{split}
\lambda_1(\theta) & =\frac{1}{2}\left(-a_2-\sqrt{a_1^2+a_3^2-2a_1a_3 \cos\theta}\right),\\
\lambda_2(\theta) & =\frac{1}{2}\left(a_2-\sqrt{a_1^2+a_3^2+2a_1a_3 \cos\theta}\right),\\
\lambda_3 (\theta)& =\frac{1}{2}\left(-a_2+\sqrt{a_1^2+a_3^2-2a_1a_3 \cos\theta}\right),\\
\lambda_4(\theta) & =\frac{1}{2}\left(a_2+\sqrt{a_1^2+a_3^2+2a_1a_3 \cos\theta}\right).
\end{split}\end{equation}

When $\theta=0$, the eigenvalues of $\mathrm{Re}A$ simplify to

\begin{equation*}\label{evalsrealA} \begin{split}
\lambda_1& =\frac{1}{2}\left(-a_2-a_1+a_3 \right),\\
\lambda_2 & =\frac{1}{2}\left(a_2-a_1-a_3 \right),\\
\lambda_3& =\frac{1}{2}\left(-a_2+a_1-a_3\right),\\
\lambda_4 & =\frac{1}{2}\left(a_2+a_1+a_3\right),
\end{split}\end{equation*}

where relabeling may have occurred based on absolute values. Furthermore, in this case it is straightforward to verify that corresponding eigenvectors of $\mathrm{Re} A$ are $ y_1=(1,-1,-1,1)$,  $y_2=(-1,1,-1,1)$, $y_3=(-1,-1,1,1)$, and $y_4=(1,1,1,1).$
Notice that

\begin{equation*}\label{Ay} \begin{split}
A y_1& =(-a_1-a_2+a_3,-a_3+a_2,a_1,0),\\
A y_2&=(a_1-a_2+a_3,a_2-a_3,a_1,0),\\
A y_3&=(-a_1+a_2+a_3,a_3+a_2,a_1,0),\\
A y_4&=(a_1+a_2+a_3,a_2+a_3,a_1,0).
\end{split}\end{equation*}

The values $\| y_j \|^2=4$ and $\langle A y_j, y_j \rangle=4 \lambda_j$ for $j=1,2,3,4.$
Therefore for all $1 \leq j, k \leq 4$,  if there is a repeated eigenvalue $\lambda_j=\lambda_k$ (extremal or not), we have

$$ \langle A y_j, y_j \rangle \|y_k \|^2=4 \lambda_j=4 \lambda_k= \langle A y_k, y_k \rangle \|y_j \|^2.$$
Hence \eqref{eqn1evec} will always hold, and a vertical flat portion will exist exactly when there is an extremal eigenvalue where \eqref{eqn2evec} fails. When $j \ne k$, the eigenvectors $y_j$ and $y_k$ are orthogonal. Accordingly,  \eqref{eqn2evec} fails to hold for a case where $\lambda_j=\lambda_k$ if and only if

\begin{equation}\label{eqn3evec}
\langle A y_j, y_k \rangle \ne 0.
\end{equation}

All possible cases to consider can be studied with the following inner products.

\begin{align}
\label{IPcase1} \langle A y_1, y_2 \rangle & =2a_3-2a_2,\\
\label{IPcase2} \langle A y_3, y_4 \rangle & =-2a_3-2a_2,\\
 \label{IPcase3} \langle A y_1, y_3 \rangle&=\langle A y_2, y_4 \rangle  =-2a_2,\\
\label{IPcase4} \langle A y_1, y_4 \rangle&=\langle A y_2, y_3 \rangle =0 \\
\nonumber
\end{align}

Now, assume the boundary of $F(A)$ has a vertical flat portion. Thus the maximal or minimal eigenvalue of $\mathrm{Re} A$ is repeated and the corresponding eigenvectors satisfy \eqref{eqn3evec}.

The equality $\lambda_1=\lambda_2$ implies $a_2=a_3$, so equation \eqref{IPcase1} shows condition \eqref{eqn3evec} fails in this case. Similarly, by equation \eqref{IPcase2}, condition \eqref{eqn3evec} fails when $\lambda_3 =\lambda_4$ and $a_2=-a_3$. Clearly, by \eqref{IPcase4}, no combination with $\lambda_1=\lambda_4$ or $\lambda_2=\lambda_3$ is possible with a vertical flat portion.

Therefore it must hold that either $\lambda_1=\lambda_3$ or $\lambda_2=\lambda_4$. In the former case, we have $a_1=a_3$; in order for this repeated $\lambda_1$ to be extremal, we must have $|a_2| \geq |a_1|$. Similarly, if $\lambda_2=\lambda_4$, then $a_1=-a_3$ and to be extremal $|a_2| \geq |a_1|$. Thus a vertical flat position implies the conditions  $|a_1|=|a_3|$ and $|a_2| \geq |a_1|$.

Conversely, if $a_1 \ne 0$, $a_1=a_3$ and $|a_2| \geq |a_1|$, then $\lambda_1=\lambda_3$ is either the maximal or minimal eigenvalue of $\mathrm{Re}A$ and $\langle A y_1, y_2 \rangle \ne 0$ by \eqref{IPcase3}.
Likewise, if $a_3=-a_1 \ne 0$ and $|a_2| \geq |a_1|$, then $\lambda_2=\lambda_4$ is an extreme eigenvalue and $\langle A y_2, y_4 \rangle \ne 0$, again by \eqref{IPcase3}. Hence in both cases Proposition~\ref{quadformtest} shows that $\partial F(A)$ has a vertical flat portion.

There are never two vertical flat portions on $\partial F(A)$ because that would require both a repeated maximal and a repeated minimal eigenvalue of $\mathrm{Re} A$, which implies $a_1=a_3$ and $a_1=-a_3$ and we assumed $a_1 \ne 0$. To see that a vertical flat portion never coexists with any other flat portion, assume there is a vertical flat portion and there is also a flat portion on the line $\cos\theta x+ \sin\theta y=d$ for some real $d$ and $\theta \in (0, 2 \pi)$ with $\theta \ne \pi$. It suffices to show that there is not a repeated maximal eigenvalue at $\theta$, because if there was a repeated minimal eigenvalue at $\theta$ there would also be a repeated maximal eigenvalue at $\theta \pm \pi$. In the list of the eigenvalues of $\mathrm{Re} \, e^{-i \theta} A$ in \eqref{evalsrotatedrealA}, $\lambda_2(\theta)<\lambda_4(\theta)$ and $\lambda_1(\theta)<\lambda_3(\theta)$. Therefore the only possibility of a repeated maximal eigenvalue is when $\lambda_3(\theta)=\lambda_4(\theta)$.
Setting $|a_1|=|a_3|$ in this equality yields

$$2a_2=\sqrt{2a_1^2-2a_1^2 \cos\theta}-\sqrt{2a_1^2+2a_1^2\cos\theta}=2|a_1| \left( \sin\left(\frac{\theta}{2}\right)-\vert \cos\left(\frac{\theta}{2}\right) \vert \right).$$
The extreme values of $ \sin(\frac{\theta}{2})-\vert \cos(\frac{\theta}{2}) \vert $ on $[0, 2 \pi)$ are 1 (only when $\theta=\pi$) and -1 (only when $\theta=0$). Thus $\lambda_3(\theta)=\lambda_4(\theta)$ cannot hold at another value of $\theta$ or else $|a_2| \geq |a_1|$ is contradicted.

\end{proof}

A sketch of the numerical range of $A$ when $a_1=1$, $a_3=-1$, and $a_2=2$ is shown in Figure~\ref{verticalflat}.

\begin{figure}[h]
\centering
\includegraphics[scale=.6]{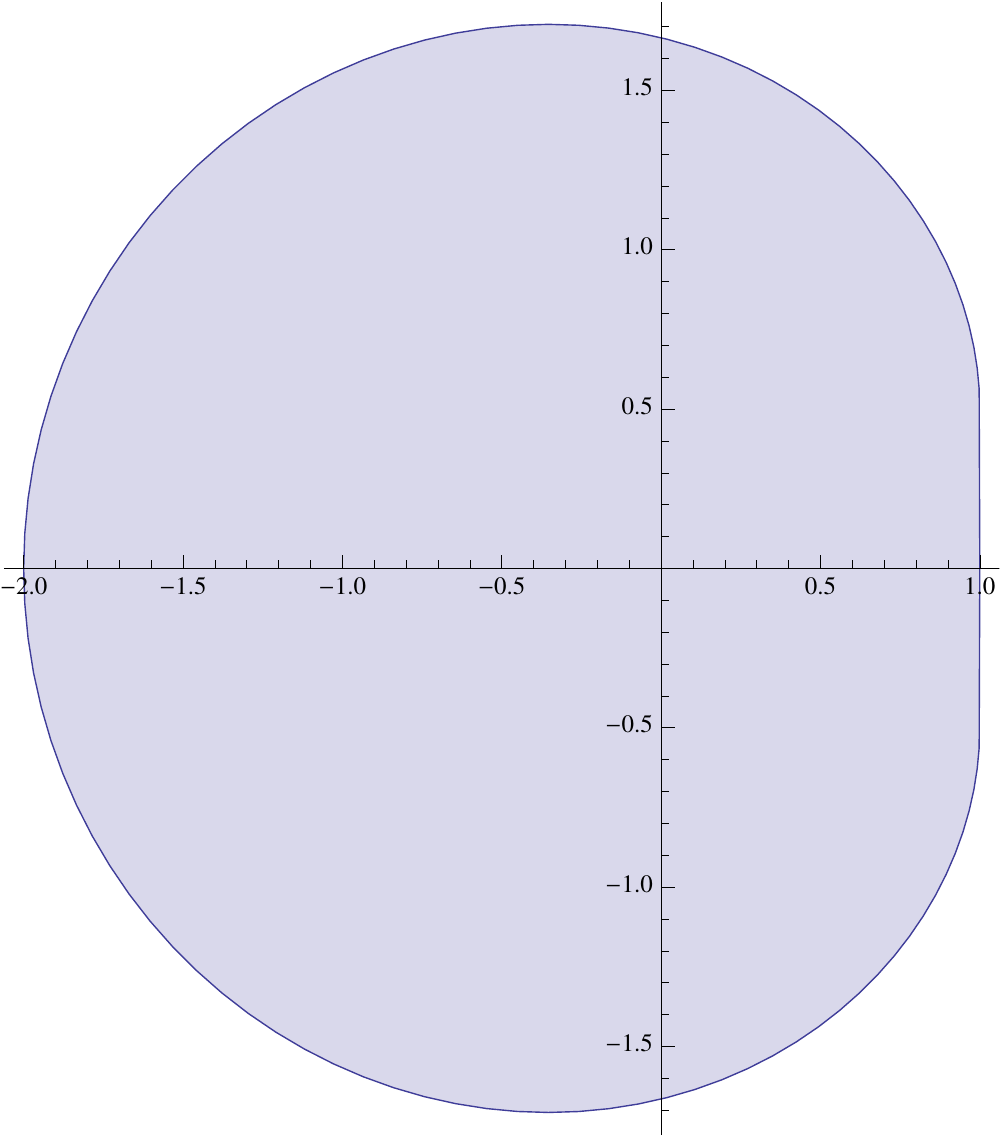}
\caption{$F(A)$ with one vertical flat portion}\label{verticalflat}
\end{figure}

\section{Matrices with two flat portions}\label{s:two}

\begin{prop}\label{p:par} A $4$-by-$4$ matrix $A$ is nilpotent, with two parallel flat portions on the boundary of its numerical range, if and only if it is a product of a scalar
$\alpha\in\C\setminus\{0\}$ and a matrix unitarily similar to
\eq{paralcanon} \begin{bmatrix}0 & a_1 & a_2 & a_3 \\ 0 & 0 & a_3 & -a_2 \\ 0 & 0 & 0 & a_1 \\ 0 & 0 & 0 & 0 \end{bmatrix}, \en
with $a_1,a_3>0$ and $a_2\in\R$.
\end{prop}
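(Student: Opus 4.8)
The plan is to prove both implications after a common normalization. First I would record the structural consequence of having two parallel flat portions. Multiplying $A$ by a suitable unimodular scalar, assume the two portions lie on vertical lines. A flat portion on the left (resp.\ right) line occurs on an exceptional supporting line and hence forces the minimal (resp.\ maximal) eigenvalue of $\re A$ to have multiplicity at least two; since $\re A$ is $4$-by-$4$, these already account for all of its eigenvalues, so $\re A$ has exactly two eigenvalues $-d$ and $d'$, each of multiplicity two. Because $A$ is nilpotent, $\tr\re A=\re\tr A=0$, whence $d=d'$. Scaling $A$ by a positive real then normalizes this common value to $1/2$, so that $2\re A=A+A^*$ becomes a Hermitian involution with eigenvalues $\pm1$, each doubled. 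This ``$\re A$ is a symmetry'' picture, available only because of nilpotency, is what drives the whole argument.

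For \emph{necessity}, with $A$ normalized as above the left portion means the hypotheses of Theorem~\ref{th:except} hold, so $A$ is unitarily similar to \eqref{Ae1} with $\alpha=1$ and left exceptional line $x=-1/2$. The extra content is that the maximal eigenvalue $+1/2$ of $\re A$ is also doubled; equivalently, the positive semidefinite rank-two matrix $A+A^*+I$ of \eqref{AA*} has spectrum $\{2,2,0,0\}$. Taking traces, $\tr(A+A^*+I)=4$ automatically, and the doubling is equivalent to $\tr\bigl((A+A^*+I)^2\bigr)=8$, i.e.\ to $\tr\bigl((A+A^*)^2\bigr)=2\norm{A}_F^2=4$ (the cross terms $\tr A^2$, $\tr(A^*)^2$ vanish by nilpotency). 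Thus the second portion is captured by the single scalar identity $\norm{A}_F^2=2$. I would then combine this with the materialization requirement on \emph{both} lines (the compressions of $A$ onto the two eigenspaces of $\re A$ are non-scalar, tested by Proposition~\ref{quadformtest}) and use the residual unitary freedom that preserves the form \eqref{Ae1} and the eigenstructure of $\re A$ to normalize the parameters, the aim being to collapse the free parameters of \eqref{Ae1} to the three real parameters of \eqref{paralcanon} and to force the equal corner entry $a_1$ together with the sign pattern $a_3,-a_2$ in the second superdiagonal.

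For \emph{sufficiency}, I would compute directly with the candidate $M$ of \eqref{paralcanon}. Setting $S_\theta=2\re(e^{-i\theta}M)$, a short calculation gives $S_\theta^2=sI+a_1a_3(1+e^{-2i\theta})(E_{13}+E_{24})+a_1a_3(1+e^{2i\theta})(E_{31}+E_{42})$, with $s=a_1^2+a_2^2+a_3^2$ and $E_{jk}$ the matrix units; its two $2$-by-$2$ blocks (coupling the index pairs $\{1,3\}$ and $\{2,4\}$) have eigenvalues $s\pm2a_1a_3\abs{\cos\theta}$. Hence the positive eigenvalues of $S_\theta$ are $\sqrt{s\pm2a_1a_3\abs{\cos\theta}}$, which coincide exactly when $\cos\theta=0$; at $\theta=\pi/2$ one gets $S_{\pi/2}^2=sI$, so $\re(e^{-i\theta}M)$ then has the doubled extreme eigenvalues $\pm\tfrac12\sqrt s$. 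Finding the corresponding eigenvectors and checking through Proposition~\ref{quadformtest} that the compressions are not scalar (this is where $a_1,a_3>0$ enters) shows both horizontal lines carry proper segments, giving two parallel flat portions, while nilpotency of $M$ is immediate from its strict upper-triangular shape. Since no other $\theta$ yields a repeated extreme eigenvalue, these are in fact the only flat portions.

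The genuine difficulty is the parameter reduction in the necessity half: turning the scalar condition $\norm{A}_F^2=2$ together with the two non-scalar-compression conditions into the rigid shape \eqref{paralcanon}. I expect this to require a careful, case-aware use of the remaining unitary degrees of freedom (including the degenerate cases where some $r_j=0$, handled as in Theorem~\ref{oneflat}) rather than any single slick identity; by contrast, the sufficiency direction and the trace-zero equidistance normalization are comparatively routine.
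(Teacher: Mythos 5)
Your reduction of ``two parallel flat portions'' to ``both extreme eigenvalues of the relevant Hermitian part are doubled, hence its square is a scalar multiple of $I$ after normalization'' is correct, and it is exactly the paper's starting point (stated there for $K=\im A$ with the portions made horizontal rather than vertical). The problem is that your necessity argument stops precisely where the real work begins: the collapse of the parameters to the rigid form \eqref{paralcanon} is never carried out --- you defer it to an unspecified ``careful, case-aware use of the remaining unitary degrees of freedom'' and yourself flag it as the genuine difficulty. Moreover, the route you choose (Theorem~\ref{th:except}, the parameters of \eqref{Ae1}, and the scalar identity $\tr(AA^*)=2$) makes that collapse needlessly hard, whereas the ``single slick identity'' you doubt exists is the involution you already derived. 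Put $A$ in strictly upper triangular Schur form \eqref{tri} with nonnegative superdiagonal and the portions horizontal, so that $K=\im A$ satisfies $K^2=\lambda^2I$; then equality of the diagonal entries of $K^2$ forces $a_{12}=a_{34}$, $\abs{a_{14}}=a_{23}$, $\abs{a_{13}}=\abs{a_{24}}$ (the paper's \eqref{fromdiag}), vanishing of its off-diagonal entries gives \eqref{fromoffdiag}, and $A^2\neq 0$ (if $A^2=0$ then $F(A)$ is a circular disk, contradicting the presence of any flat portion) forces $a_1,a_3>0$ and resolves the remaining signs to $a_{14}=a_3$, $a_{13}=-a_{24}=a_2\in\R$. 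Note also that the materialization conditions of Proposition~\ref{quadformtest}, which you planned to fold into the necessity argument, are not needed there at all.

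Sufficiency has a genuine hole as well. Your computation of $S_\theta^2$ is correct and shows that a repeated extreme eigenvalue of $\re(e^{-i\theta}M)$ occurs only for $\cos\theta=0$; but a doubled extreme eigenvalue only produces an \emph{exceptional} supporting line, not a flat portion --- the paper's example following Theorem~\ref{oneflat} exhibits an exceptional line carrying no flat portion, so the non-scalar-compression check you gesture at (``this is where $a_1,a_3>0$ enters'') is the actual content of this direction, and it is not performed. The paper avoids that eigenvector computation entirely with a symmetry argument worth knowing: since $M$ is real, $F(M)$ is symmetric about the real axis, so it has either two or zero horizontal flat portions; if zero, the compressions of $\re M$ onto both eigenspaces of $\im M$ would be scalar, which after a block-diagonal unitary puts $M$ in the permutationally reducible form \eqref{red}, contradicting $M^3=a_1^2a_3E_{14}\neq 0$ (so $M$ is similar to a single $4$-by-$4$ Jordan block and cannot be unitarily reducible). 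Either your explicit check, actually completed, or this argument would close sufficiency; as written, neither implication of the proposition is fully proved.
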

\begin{proof} {\sl Necessity.} Using Schur's lemma, we may without loss of generality suppose that the matrix $A$ is upper triangular. Being nilpotent, it thus can be written as
\eq{tri} A=\begin{bmatrix}0 & a_{12} & a_{13} & a_{14} \\ 0 & 0 & a_{23} & a_{24} \\ 0 & 0 & 0 & a_{34} \\ 0 & 0 & 0 & 0 \end{bmatrix}. \en
Note that the general form of $A$ has entries with double subscripts, but we will gradually simplify $A$ so that it has the form \eqref{paralcanon}. Multiplying matrix \eqref{tri} by an appropriate scalar (of absolute value one, if desired), we may also suppose that the parallel flat portions on the boundary of $F(A)$ are horizontal. Yet another unitary similarity, this time via a diagonal matrix, allows us to adjust the arguments of the entries $a_{i,i+1}$ any way we wish ($i=1,2,3$) without changing the absolute values. Let us agree therefore to choose these entries real and non-negative.

Having agreed on the above, we now introduce $H=\re A$ and $K=\im A$ as the hermitian matrices from the representation $A=H+iK$, so in particular
\eq{K} K=\frac{1}{2i}\begin{bmatrix}0 & a_{12} & a_{13} & a_{14} \\ -a_{12} & 0 & a_{23} & a_{24} \\ -\overline{a_{13}} & -a_{23} & 0 & a_{34} \\ -\overline{a_{14}} & -\overline{a_{24}} & -a_{34} & 0 \end{bmatrix}. \en
For a matrix $B$ of any size, two horizontal flat portions on the boundary of $F(B)$ can materialize only if the maximal and minimal eigenvalues of $\im B$ both have multiplicity at least 2. For our $4$-by-$4$ matrix $A$ it means that $K$ has two distinct eigenvalues, say $\lambda_1$ and $\lambda_2$, each of multiplicity two. In addition, $\tr K=0$, and so $\lambda_1=-\lambda_2 (:=\lambda)$. Consequently, \[ K^2=\lambda^2 I.\]
In particular, the diagonal entries of $K^2$ are all equal. From here and \eqref{K}:
\[ a_{12}^2+\abs{a_{13}}^2+\abs{a_{14}}^2=a_{12}^2+a_{23}^2+\abs{a_{24}}^2=a_{34}^2+\abs{a_{13}}^2+a_{23}^2=a_{34}^2+\abs{a_{14}}^2+\abs{a_{24}}^2. \]
Equivalently (and taking into account non-negativity of $a_{12},a_{23},a_{34}$):
\eq{fromdiag} a_{12}=a_{34}:=a_1, \quad \abs{a_{14}}=a_{23}:=a_3,\quad \abs{a_{13}}=\abs{a_{24}}.\en
Taking \eqref{fromdiag} into consideration, the fact that off diagonal entries of $K^2$ are all equal to zero boils down to
\eq{fromoffdiag} a_1(a_3-a_{14})=0, \quad a_1(a_{13}+a_{24})=0,\quad a_3a_{13}+a_{14}\overline{a_{24}}=0,\quad a_3a_{24}+\overline{a_{13}}a_{14}=0. \en
On the other hand, from \eqref{tri} and \eqref{fromdiag}:
\[ A^2= \begin{bmatrix} 0 & 0 & a_1a_3 & a_1(a_{13}+a_{24}) \\ 0 & 0 & 0 & a_1a_3 \\ 0 & 0 & 0 & 0 \\  0 & 0 & 0 & 0 \end{bmatrix}, \]
which, when combined with the second equation in \eqref{fromoffdiag}, yields
\[ A^2= a_1a_3\begin{bmatrix} 0 & 0 & 1 & 0 \\ 0 & 0 & 0 & 1 \\ 0 & 0 & 0 & 0 \\  0 & 0 & 0 & 0 \end{bmatrix}. \] But $A^2\neq 0$, since otherwise $F(A)$ would be a circular disk (see e.g. \cite{TsoWu}) exhibiting no flat portions on the boundary. So, $a_1,a_3>0$. The solution to \eqref{fromoffdiag} is then given by \[ a_{14}=a_3, \quad a_{13}=-a_{24}\ (:=a_2)\in\R. \]
So, $A$ is indeed in the form \eqref{paralcanon} up to unitary similarity and scaling.

{\sl Sufficiency.} The scalar multiple $\alpha$ is inconsequential, so without loss of generality $A$ is given by \eqref{paralcanon}. The eigenvalues of $K=\im A$ are then $\pm\lambda$, each of multiplicity two, where
$\lambda=\sqrt{a_1^2+a_2^2+a_3^2}/2$. Let us apply a unitary similarity, putting $K$ in the form $\begin{bmatrix}\lambda I & 0 \\ 0 & -\lambda I\end{bmatrix}$, and denote by $\begin{bmatrix}H_1 & Z \\ Z^* & H_2\end{bmatrix}$
the result of applying the same unitary similarity to $\re A$.

Since $A$ is real, its numerical range $F(A)$ is symmetric with respect to the $x$-axis, and thus there are either two or no horizontal flat portions on its boundary. But the latter is possible only if $H_1$ and $H_2$ are scalar multiples of the identity. Applying yet another (block diagonal) unitary similarity, we can reduce $A$ to \eq{red} \begin{bmatrix}h_1+i \lambda & 0 & \sigma_1 & 0 \\ 0 &h_1+i \lambda & 0 & \sigma_2 \\
 \sigma_1 & 0 &h_2-i \lambda & 0 \\ 0 & \sigma_2 & 0 & h_2-i \lambda \end{bmatrix}, \en where $\sigma_1,\sigma_2$ are the $s$-numbers of $Z$. The matrix \eqref{red} is unitarily (and even permutationally) reducible, which is in contradiction with the fact that $A$ has just one Jordan block. So, there are indeed two parallel flat portions on the boundary of $F(A)$.
\end{proof}
{\sl Remark.} From the proof of Proposition~\ref{p:par} it is clear that the parallel flat portions of $\partial F(A)$ are on lines that are at an equal distance $\abs{\alpha}\sqrt{a_1^2+a_2^2+a_3^2}/2$ from the origin, forming the angle $\arg\alpha$ with the positive direction of the $x$-axis. Of course, $\abs{\alpha}$ can be changed arbitrarily via absorbing it (or part thereof) by the $a_j$ entries in \eqref{paralcanon}.
\begin{corollary}\label{co:onlytwo}If $A$ is a $4$-by-$4$ nilpotent matrix with two parallel flat portions on the boundary of its numerical range, then these are the only flat portions of the boundary. \end{corollary}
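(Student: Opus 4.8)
The plan is to use Proposition~\ref{p:par} to put $A$ into its canonical shape and then run a direct spectral analysis of $\re(e^{-i\theta}A)$ as $\theta$ varies, showing that a repeated extremal eigenvalue — the prerequisite for any flat portion — can occur only in the two horizontal directions. First I would note that both the scalar factor $\alpha$ and unitary similarity leave the number of flat portions of $F(A)$ unchanged (they merely scale, rotate, or fix $F(A)$), so by Proposition~\ref{p:par} I may assume outright that $A$ is exactly the matrix \eqref{paralcanon} with $a_1,a_3>0$ and $a_2\in\R$, and that its two parallel flat portions are horizontal, lying on the exceptional supporting lines $\ell_\theta$ at $\theta=\pi/2$ and $\theta=3\pi/2$, where $\re(e^{-i\theta}A)=\pm\im A$. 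Recall next, from the discussion preceding Theorem~\ref{th:except} (based on \cite[Theorem 2.1]{LLS13}), that every flat portion of $\partial F(A)$ lies on an exceptional supporting line, and that $\theta$ can be exceptional only when $\re(e^{-i\theta}A)$ has a repeated minimal eigenvalue. Hence it suffices to prove that $\re(e^{-i\theta}A)$ has a repeated extremal eigenvalue exactly when $\theta\in\{\pi/2,3\pi/2\}$: away from those directions no flat portion can form, and in those two directions Proposition~\ref{p:par} already supplies the two parallel portions.

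The computational heart is the characteristic polynomial of $\re(e^{-i\theta}A)$. Writing $M(\theta)=2\re(e^{-i\theta}A)$, its trace vanishes, and the crucial point is that $\tr M(\theta)^3=0$ as well: in the expansion of $\tr M(\theta)^3$ as a sum over oriented triangles, the two triangles that use the sign-flipped $(2,4)$-entry of \eqref{paralcanon} exactly cancel the two that do not. Consequently the characteristic polynomial has no odd-degree terms and is biquadratic,
\[
\det\big(\mu I-\re(e^{-i\theta}A)\big)=\mu^4-\tfrac12 s\,\mu^2+\tfrac1{16}\det M(\theta),\qquad s:=a_1^2+a_2^2+a_3^2,
\]
so the four eigenvalues are $\pm\sqrt{t_\pm}$ with $t_\pm=\tfrac14\big(s\pm\sqrt{s^2-\det M(\theta)}\big)$. (This evenness is precisely the central symmetry of $F(A)$ forced by the $-a_2$ entry.) The minimal eigenvalue $-\sqrt{t_+}$ is therefore repeated if and only if $t_+=t_-$, that is, if and only if $\det M(\theta)=s^2$.

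Finally, a direct evaluation of the determinant gives
\[
\det M(\theta)=a_1^4+a_2^4+a_3^4+2a_1^2a_2^2+2a_2^2a_3^2-2a_1^2a_3^2\cos 2\theta,
\]
so that $\det M(\theta)=s^2$ collapses, after cancelling the common terms of $s^2$, to $-2a_1^2a_3^2\cos 2\theta=2a_1^2a_3^2$; since $a_1,a_3>0$ this forces $\cos 2\theta=-1$, i.e. $\theta\in\{\pi/2,3\pi/2\}$. Thus no direction other than the two horizontal ones yields a repeated extremal eigenvalue, hence no flat portion beyond the two already present, which proves the corollary. The main obstacle is the bookkeeping behind the two displayed formulas: verifying that the odd-degree coefficients vanish (equivalently $\tr M(\theta)^3=0$) and computing $\det M(\theta)$ cleanly. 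The conceptual key is that the sign flip in \eqref{paralcanon}, inherited from Proposition~\ref{p:par}, is exactly what renders the spectrum symmetric about $0$ and reduces the repeated-eigenvalue condition to $\cos 2\theta=-1$.
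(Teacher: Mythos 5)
Your proof is correct, and it takes a genuinely different route from the paper's. The paper, after reducing to the canonical form \eqref{paralcanon}, argues case by case on the direction of a hypothetical third flat portion: an oblique one is excluded because the symmetry of $F(A)$ about the real axis would produce a fourth flat portion, contradicting Theorem~\ref{4x4} together with the unitary irreducibility coming from Proposition~\ref{p:red}; a vertical one is excluded by comparing the characteristic polynomials of the $3$-by-$3$ principal submatrices of $\re A$, which forces $a_2=0$, and then a second symmetry argument (about the imaginary axis) again yields four flat portions. You instead treat all directions at once: you verify that $\tr M(\theta)=\tr M(\theta)^3=0$ (I checked that $\tr(A^2A^*)=a_1a_3(a_2-a_2)=0$, so the cubic trace does vanish), conclude the characteristic polynomial of $\re(e^{-i\theta}A)$ is biquadratic with spectrum $\pm\sqrt{t_\pm}$, and reduce the existence of an exceptional angle to $\det M(\theta)=s^2$, i.e.\ $\cos 2\theta=-1$; your determinant formula is also correct. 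This is a cleaner and more self-contained argument --- it needs neither Theorem~\ref{4x4} nor unitary irreducibility, and it shows the stronger fact that $\pi/2$ and $3\pi/2$ are the \emph{only} exceptional angles, not merely the only ones carrying flat portions --- at the price of an explicit $4$-by-$4$ determinant computation that the paper's qualitative symmetry argument avoids. The only point worth making explicit in a final write-up is that each exceptional supporting line meets the convex set $F(A)$ in a single segment, so the two exceptional angles account for exactly the two given parallel flat portions.
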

\begin{proof} Without loss of generality, $A$ is in the form \eqref{paralcanon}, and thus there are two horizontal flat portions on the boundary of $F(A)$. We also know $A$ is irreducible from Proposition~\ref{p:red}. Any other flat portion $\ell$ of $\partial F(A)$, if it exists, cannot be horizontal. Suppose it is not vertical either. Then, due to the symmetry of $F(A)$ with respect to the $x$-axis, $\partial F(A)$ would have to contain the complex conjugate of $\ell$ as well, bringing the number of flat portions to (at least) four. But any $4$-by-$4$ matrix with four flat portions on the boundary of its numerical range is unitarily reducible, as stated in Theorem~\ref{4x4}, while $A$ is not.

It remains to consider the case of vertical $\ell$. In order for it to exist, the matrix $H=\re A$ should have a multiple eigenvalue. This eigenvalue would then have to be common for $3$-by-$3$ principal submatrices of $H$. A direct computation shows, however, that the characteristic polynomials of these submatrices up to a constant multiple equal
 \[ -4\lambda^3+\lambda (a_1^2+a_2^2+a_3^2)+a_1a_2a_3 \text{ and } -4\lambda^3+\lambda (a_1^2+a_2^2+a_3^2)-a_1a_2a_3. \] So, the common eigenvalues occur only if $a_2=0$ (recall that $a_1$ and $a_3$ are strictly positive).

On the other hand, if $a_2=0$, then the matrix $iA$ under the diagonal unitary similarity via $\diag[1,i,1,i]$ turns into
\[ \begin{bmatrix} 0 & a_1 & 0 & a_3 \\ 0 & 0 & -a_3 & 0 \\ 0 & 0 & 0 & a_1 \\  0 & 0 & 0 & 0\end{bmatrix}.\]
So, $F(A)$ in this case is symmetric with respect to the $y$-axis as well. Thus, a vertical flat portion $\ell$ would have its counterpart $-\ell$ on $\partial F(A)$ which again would bring the number of flat portions up to four, in contradiction with the unitary irreducibility of $A$. \end{proof}

\begin{prop}Let $A$ be a $4$-by-$4$ nilpotent unitarily irreducible matrix with two non-parallel flat portions on the boundary of its numerical range $F(A)$ the supporting lines of which are equidistant from the origin. Then these are
the only flat portions of $\partial F(A)$. \label{p:nonpa} \end{prop}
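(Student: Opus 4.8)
The plan is to first normalize, then to prove that the configuration forces $A$ to be a scalar multiple of a real matrix, and finally to count flat portions using the resulting symmetry together with Theorem~\ref{4x4}.

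\emph{Normalization.} Two non-parallel lines equidistant from the origin are interchanged by the reflection through the origin in the bisector of the angle between their normals. Multiplying $A$ by a suitable scalar $\alpha\in\C\setminus\{0\}$ (which affects neither nilpotency, nor unitary irreducibility, nor the number of flat portions) I would rotate this bisector onto the real axis and scale the common distance to $1/2$. After this normalization the two flat portions correspond to the exceptional angles $\pm\theta_0$ with $\theta_0\in(0,\pi/2)$, the endpoints $0$ and $\pi/2$ being excluded precisely because the portions are non-parallel. Equivalently, both $\re(e^{-i\theta_0}A)$ and $\re(e^{i\theta_0}A)$ have $-1/2$ as a minimal eigenvalue of multiplicity two, and by Proposition~\ref{quadformtest} the compression of $A$ onto each of the two corresponding eigenspaces fails to be scalar.

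\emph{Reality.} The heart of the argument is to show that $A$ is unitarily similar to a real matrix; equivalently, that there is an anti-unitary involution $J$ with $J(\re A)J=\re A$ and $J(\im A)J=-\im A$, so that $F(A)$ is symmetric about the real axis and the two flat portions form a conjugate pair. I would obtain this by bringing $e^{-i\theta_0}A$ into the canonical form \eqref{Ae1} of Theorem~\ref{th:except}, which records the first flat portion, and then imposing the second exceptional-line condition on $\re\big(e^{2i\theta_0}(e^{-i\theta_0}A)\big)=\re(e^{i\theta_0}A)$ together with the genuine-flatness conditions of Theorem~\ref{oneflat}. Translating these relations into constraints on the entries should pin the parameters down up to the symmetry $J$. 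This is the main obstacle: it is the one genuinely computational step, and it is where the hypothesis that the two distances \emph{coincide} (rather than merely that two exceptional lines exist) is essential. Note that the parallel analogue, Proposition~\ref{p:par}, likewise yields a real canonical form \eqref{paralcanon}, which makes the reality of the non-parallel canonical form plausible.

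\emph{Counting.} Granting reality, write $A=\alpha R$ with $R$ real, so $F(R)$ is symmetric about the real axis with the two given portions at normal angles $\pm\theta_0$. Suppose $\partial F(A)$ had a further flat portion $\ell$. If $\ell$ is not vertical, then its reflection in the real axis is a fourth distinct flat portion, so $A$ would have four flat portions and hence be unitarily reducible by Theorem~\ref{4x4}, contradicting the hypothesis. If $\ell$ is vertical, then $\re R$ must have a repeated extreme eigenvalue whose (orthogonal) eigenvectors violate \eqref{eqn2evec}; using the explicit real form from the previous step and comparing, exactly as in the proof of Corollary~\ref{co:onlytwo}, the characteristic polynomials of the $3$-by-$3$ principal submatrices of $\re R$, I expect such a coincidence to be possible only when $\theta_0\in\{0,\pi/2\}$, i.e.\ when the two portions are parallel, again a contradiction. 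Therefore no third flat portion exists, and the two given portions are the only ones on $\partial F(A)$.
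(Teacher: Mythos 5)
Your outline — normalize so the two portions are symmetric about the real axis, show $A$ can be taken real, then count via reflection plus a separate vertical case — is exactly the skeleton of the paper's proof. But the two steps you defer are precisely where the substance lies, and in both places your stated expectations are off in ways that matter.

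First, the reality step. The paper derives reality from the rank-$\le 2$ conditions on $I+2\im(\omega A)$ and $I+2\im(-\overline{\omega}A)$: vanishing of the $3$-by-$3$ principal minors forces each product $a_{12}\overline{a_{13}}a_{23}$, etc., to be real, and hence each off-superdiagonal entry to be real — \emph{but only when the relevant superdiagonal entries are nonzero}. When, say, $a_{12}=0$ or $a_{23}=0$, reality cannot be deduced at all; instead, equation \eqref{1} then says the upper-left $3$-by-$3$ block has numerical range a disk centered at the origin, and the paper closes these degenerate cases by invoking Theorem~\ref{GauWuCircular}, not by any symmetry argument. Your proposal has no provision for this branch, and without it the claimed anti-unitary symmetry $J$ simply need not exist from your hypotheses alone. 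Second, the vertical case. You expect that comparing characteristic polynomials of the $3$-by-$3$ principal submatrices of $\re A$, ``exactly as in Corollary~\ref{co:onlytwo},'' will show the coincidence of extreme eigenvalues is impossible unless $\theta_0\in\{0,\pi/2\}$. That is false: the coincidence \emph{is} possible. What the computation actually yields (paper: equations \eqref{456} combined with \eqref{2}--\eqref{123}, after discarding the corner-point case $1-x\eta=0$ and another Gau-Wu degenerate case) is that all four triple products are equal, forcing $A$ into the symmetric form \eqref{Ar1}. Matrices of that form genuinely can have a vertical flat portion; the contradiction comes only from Lemma~\ref{realA} — an entire section's worth of analysis of that special family — which says that for such matrices a vertical flat portion, when present, is the \emph{only} flat portion, contradicting the assumed pair at $\pm\theta_0$. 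Corollary~\ref{co:onlytwo}'s argument works because \eqref{paralcanon} is a rigid three-parameter form; the non-parallel case leaves too many free entries for that shortcut, which is presumably why the authors needed Lemma~\ref{realA} in the first place.
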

\begin{proof} Multiplying $A$ by a non-zero scalar we may without loss of generality suppose that the given flat portions of $\partial F(A)$ lie on lines intersecting at some point on the negative real half-axis and that the distance from each line to the origin equals $1/2$. Then for some unimodular $\omega=\xi+i\eta$ ($\xi \neq 0,\ \eta >0$) the imaginary part of both $\omega A$ and $-\overline{\omega}A$ will have  a multiple eigenvalue $-1/2$. With this notation, these lines will be $\eta x \pm \xi y=-\frac{1}{2};$ they intersect at $(-\frac{1}{2 \eta}, 0)$.

By an appropriate unitary similarity $A$ can be put in the upper triangular form \eqref{tri} and, moreover, the elements $a_{12}, a_{23}, a_{34}$ of its first sup-diagonal can be all made real. The above
mentioned condition on the eigenvalues of $\im (\omega A)$, $\im (-\overline{\omega} A)$ implies then
the matrices \eq{ima} \begin{bmatrix} 1 & -i\omega a_{12} & -i\omega a_{13} & -i\omega a_{14}\\ i\overline{\omega}\overline{a_{12}} & 1 & -i\omega a_{23} & -i\omega a_{24} \\
 i\overline{\omega}\overline{a_{13}} &  i\overline{\omega}\overline{a_{23}} & 1 & -i\omega a_{34}\\ i\overline{\omega}\overline{a_{14}} & i\overline{\omega}\overline{a_{24}} & i\overline{\omega}\overline{a_{34}} & 1\end{bmatrix}
 \text{ and }
\begin{bmatrix} 1 & i \overline{\omega} a_{12} & i \overline{\omega} a_{13} & i \overline{\omega} a_{14} \\ -i{\omega}\overline{a_{12}} & 1 & i \overline{\omega} a_{23} & i \overline{\omega} a_{24} \\
-i{\omega}\overline{a_{13}} &  -i{\omega}\overline{a_{23}} & 1 & i \overline{\omega} a_{34}\\ -i{\omega}\overline{a_{14}} & -i{\omega}\overline{a_{24}} & -i{\omega}\overline{a_{34}} & 1\end{bmatrix}\en
have rank at most 2. Equating the left upper $3$-by-$3$ minors of \eqref{ima} to zero, we see that  \[ 1-(\abs{a_{13}}^2+\abs{a_{12}}^2+\abs{a_{23}}^2)+2\im(\omega a_{12}\overline{a_{13}}a_{23})=0 \]
and \[ 1-(\abs{a_{13}}^2+\abs{a_{12}}^2+\abs{a_{23}}^2)-2\im(\overline{\omega}a_{12}\overline{a_{13}}a_{23})=0. \] Equivalently, $a_{12}\overline{a_{13}}a_{23}$ is real, and  \eq{1} 1-(\abs{a_{13}}^2+\abs{a_{12}}^2+\abs{a_{23}}^2)+2\eta a_{12}\overline{a_{13}}a_{23}=0.\en
If $a_{12}=0$ or $a_{23}=0$, then \eqref{1} implies that for the $3$-by-$3$ matrix $B$ located in the upper left corner of $A$ the numerical range $F(B)$ is the circular disk $\{z\colon\abs{z}\leq 1/2\}$
(see \cite{Marc} or \cite[Theorem 4.1]{KRS}). Since this case is covered by Theorem~\ref{GauWuCircular}, we may suppose that $a_{12},a_{23}\neq 0$. Then $a_{13}$ is necessarily real along with $a_{12},a_{23}$,
and \eqref{1} can be rewritten as
\eq{2} 1-(a_{13}^2+a_{12}^2+a_{23}^2)+2\eta a_{12}a_{13}a_{23}=0.\en

Repeating this reasoning for three other principal $3$-by-$3$ minors in \eqref{ima} we see that without loss of generality $A$ is of the form \eqref{tri}, real, and, along with \eqref{2}:
 \begin{equation}\label{123} \begin{split}
1-(a_{12}^2+a_{14}^2+a_{24}^2)+2\eta a_{12}{a_{14}}a_{24} & =0,\\
1-(a_{13}^2+a_{14}^2+a_{34}^2)+2\eta a_{13}{a_{14}}a_{34} & =0,\\
1-(a_{23}^2+a_{24}^2+a_{34}^2)+2\eta a_{23}{a_{24}}a_{34} & =0.
\end{split}\end{equation}

Since $A$ is real, the third flat portion of $\partial F(A)$, if it exists, must be vertical. Indeed, otherwise its reflection with respect to the real axis would be the fourth flat portion of
$\partial F(A)$, implying by Theorem~\ref{4x4} the unitary reducibility of $A$. But this would contradict Proposition~\ref{p:red}.

So, suppose now that a vertical flat portion of $\partial F(A)$ is indeed present. Then its abscissa, which it is convenient for us to denote by $-x/2$, is a multiple eigenvalue of $\re A$. Equivalently, the matrix
\[ \begin{bmatrix} x & a_{12} & a_{13} & a_{14} \\ a_{12} & x & a_{23} & a_{24} \\ a_{13} & a_{23} & x & a_{34} \\ a_{14} & a_{24} & a_{34} & x \end{bmatrix} \] has rank  at most two. Consequently,
\begin{equation}\label{456} \begin{split}
x^3-x({a_{13}}^2+{a_{12}}^2+{a_{23}}^2)+2a_{12}{a_{13}}a_{23} & =0,\\
x^3-x({a_{12}}^2+{a_{14}}^2+{a_{24}}^2)+2a_{12}{a_{14}}a_{24} & =0,\\
x^3-x({a_{13}}^2+{a_{14}}^2+{a_{34}}^2)+2a_{13}{a_{14}}a_{34} & =0,\\
x^3-x({a_{23}}^2+{a_{24}}^2+{a_{34}}^2)+2a_{23}{a_{24}}a_{34} & =0.
\end{split}\end{equation}
Comparing the respective lines in \eqref{2}--\eqref{123} and \eqref{456}, we conclude that $x^3-x+2C(1-x\eta)=0$ when in place of $C$ is plugged in any of the products $a_{12}{a_{13}}a_{23},a_{12}{a_{14}}a_{24},a_{13}{a_{14}}a_{34},a_{23}{a_{24}}a_{34}$. If $1- x \eta =0$, then $-x/2=-1/2 \eta$, the point on the negative x-axis where the support lines containing the flat portions of $\partial F(A)$ intersect. This would be a corner point on $F(A)$, thus implying unitary reducibility of $A$. So, $1- x \eta \ne 0$ and the above mentioned permissible values of $C$ are all equal:
\eq{epro} a_{12}{a_{13}}a_{23}=a_{12}{a_{14}}a_{24}=a_{13}{a_{14}}a_{34}=a_{23}{a_{24}}a_{34}. \en From here we conclude that the coefficients of $x$ in all the equations \eqref{456} are also equal:
\eq{equa} {a_{13}}^2+{a_{12}}^2+{a_{23}}^2={a_{12}}^2+{a_{14}}^2+{a_{24}}^2={a_{13}}^2+{a_{14}}^2+{a_{34}}^2={a_{23}}^2+{a_{24}}^2+{a_{34}}^2.\en From \eqref{epro}--\eqref{equa} we conclude that
\eq{sol} a_{34}=\epsilon_1 a_{12}, \ a_{14}=\epsilon_2 a_{23},\ a_{24}=\epsilon_3 a_{13}, \quad \epsilon_j=\pm 1, \ j=1,2,3, \en and either $a_{12}a_{13}a_{23}=0$ or $\epsilon_1=\epsilon_2=\epsilon_3$.
Since the former case is covered by Theorem~\ref{GauWuCircular}, we may concentrate on the latter.
Moreover, applying a unitary similarity via the diagonal matrix $\diag[1,1,1,-1]$, we may change the signs of all $\epsilon_j$ simultaneously, and thus to suppose that they are all equal to $1$.

In other words, it remains to consider \eq{Ar1}   A=\begin{bmatrix}0 & a_{1} & a_2 & a_{3} \\ 0 & 0 & a_{3} & a_2 \\  0 & 0 & 0 & a_{1} \\ 0 & 0 & 0 & 0\end{bmatrix} \en
 with $a_j\in\R$ and $a_1\neq 0$. By Lemma~\ref{realA}, the numerical range of a matrix of this form can only have a vertical flat portion on its boundary if that is the only flat portion.
 Therefore the original matrix has only the two flat portions on lines which are equidistant from the origin.
 \end{proof}

 \section{Proof of the main result}\label{s:proof}
 In this section, we show there are at most two flat portions on the boundary of the numerical range of a $4$-by-$4$ unitarily irreducible nilpotent matrix. This result follows from an analysis of the singularities of the polynomial $p_A$ defined in \eqref{pa}, so some facts about algebraic curves are reviewed next for convenience.

Define the complex projective plane $\mathbb{C} \mathbb{P}^2$ to be the set of all equivalence classes of points in $\mathbb{C}^3 -\left\{ (0,0,0)  \right\}$ determined by the equivalence relation $\sim$ where
$(x,y,z) \sim (a,b,c)$ if and only if $(x,y,z)=\lambda (a,b,c)$ for some nonzero complex number $\lambda$. The complex plane can be considered a subset of  $\mathbb{C} \mathbb{P}^2$ if
the point $(x,y,1)$ for $(x,y) \in \mathbb{R}^2$ is identified with $x+iy$.

An {\it algebraic curve} $C$ is defined to be the zero set of a homogeneous polynomial $f(x,y,z)$ in $\mathbb{C} \mathbb{P}^2$. If $f(x,y,z)$ has real coefficients, the real affine part of the curve C is defined to be all $(x,y) \in \mathbb{R}^2$ such that $f(x,y,1)=0$.

There is a one-to-one correspondence between points $(u,v,w) \in \mathbb{C} \mathbb{P}^2$ and lines $L$ in $\mathbb{C} \mathbb{P}^2$ given by the mapping
that sends $(u,v,w)$ to the line

$$\left\{ (x,y,z) \in \mathbb{C} \mathbb{P}^2 \, : \ ux+vy +wz=0 \right\}.$$
Therefore, $(u,v,w)$ could denote either a point or a line. In the latter case, the coordinates are called {\it line coordinates}.

Let $f(x,y,z)$ be a homogeneous polynomial. Let $C$ be the algebraic curve defined by $f$ in point coordinates. That is,

$$C=\left\{ (x,y,z) \in \mathbb{C} \mathbb{P}^2 \, : \, f(x,y,z)=0 \right\}.$$

The curve $C'$ which is {\it dual} to $C$ is obtained by considering $f$ in line coordinates. That is,

$$C'=\left\{ (u,v,w) \in \mathbb{C} \mathbb{P}^2 \, : \, ux+vy+wz=0 \text{ is a tangent line to $C$} \right\}.$$

The dual curve is an algebraic curve \cite{Bries} \cite{Walk} except in the special case where the original curve is a line and the dual is a point; we will assume this is not the case. Therefore there exists a homogeneous polynomial $p(u,v,w)$ such that $(u,v,w) \in C'$ if and only if $p(u,v,w)=0$.  The references just noted also show that the dual curve to the dual curve is the original curve. Therefore a point on $C$ corresponds to a tangent line to $C'$ and vice versa.

A {\it singular point} of a homogeneous polynomial $p$ is a point $(u_0,v_0,w_0)$  on the curve $C'$ defined by $p=0$ such that
$$(p_u(u_0,v_0,w_0),p_v(u_0,v_0,w_0), p_w(u_0,v_0,w_0))=(0,0,0).$$ If $(u_0,v_0,w_0)$ is not a singular point, then
the curve $C'$ has a well-defined tangent line at $(u_0,v_0,w_0)$ with line coordinates $(p_u(u_0,v_0,w_0),p_v(u_0,v_0,w_0),p_w(u_0,v_0,w_0))$.

If $(u_0,v_0,1)$ is a singular point of $C'$, then the line $u=u_0+\lambda t$, $v=v_0+ \mu t$ for $t \in \mathbb{C}$ intersects the curve $C'$ at $t=0$. If the second order partial derivatives of $p$ are not all zero at $(u_0,v_0,1)$, then the Taylor expansion of $p(u_0+\lambda t, v_0+\mu t,1)$ shows that

$$p(u,v,1)=\left(p_{xx}(u_0,v_0,1) \lambda^2+ 2p_{xy}(u_0,v_0,1) \lambda \mu + p_{yy}(u_0,v_0,1) \mu^2 \right) t^2 + \ldots ,$$
and in this case the curve $C'$ has two tangent lines (counting multiplicity) at $(u_0,v_0,1)$ defined by $(\lambda, \mu, 1)$ such that $p_{xx}(u_0,v_0,1) \lambda^2+ 2p_{xy}(u_0,v_0,1) \lambda \mu + p_{yy}(u_0,v_0,1) \mu^2=0$. If the minimum order for which all the partial derivatives at $(u_0, v_0, 1)$ of order $r$ are not identically zero is $r>2$, we similarly obtain $r$ tangent lines to $C'$ at $(u_0, v_0,1)$, counting multiplicities.
Conversely, any point at which $p=0$ has two (or more) tangent lines is a singular point of $p$.

Since $p_A(u,v,w)$ defined in \eqref{pa} is a homogeneous polynomial, its zero set is an algebraic curve in
$\mathbb{C} \mathbb{P}^2.$ Recall that Kippenhahn \cite{Ki} showed that the convex hull of the real affine part of the curve $C(A)$ which is dual to $p_A(u,v,w)=0$  is the numerical range of $A$. In terms of the description above, Kippenhahn showed that $p_A(u,v,w)$ is the polynomial $p$ above, while the curve $C(A)$ is given by $C$ above. He called $C(A)$ the {\it boundary generating curve} of $F(A)$.  In the proof below, $C'(A)$ is the curve given by $p_A(u,v,w)$ in point coordinates.

\begin{lemma}\label{singatflat} Let $A$ be an $n$-by-$n$ matrix. If the line
$$\left\{ (x,y) \in \mathbb{R}^2 \, : \, u_0x+v_0y+1=0 \right\}$$
contains a flat portion on the boundary of $F(A)$, then the homogeneous polynomial
$p_A(u,v,w)$ defined by equation \eqref{pa} has a singularity at $(u_0,v_0,1)$.
\end{lemma}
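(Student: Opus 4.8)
The plan is to exploit the duality between $C(A)$ and $C'(A)$ set up in the paragraphs above. The key conceptual point is that a flat portion on $\partial F(A)$ is a line segment, and a whole segment of the supporting line is tangent to the boundary generating curve $C(A)$ at infinitely many points — or, put in the language of duality, the single point $(u_0,v_0,1)$ of $C'(A)$ corresponding to that supporting line is the image of many distinct points of $C(A)$. I would argue that this forces $(u_0,v_0,1)$ to be a point of $C'(A)$ through which more than one tangent line passes, and by the converse statement established just before the lemma (``any point at which $p=0$ has two (or more) tangent lines is a singular point of $p$''), this is exactly a singularity of $p_A$.

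First I would record that, since the stated line carries a flat portion of $\partial F(A)$, it is a supporting line of $F(A)$, hence (being dual to a point of the boundary generating curve) the point $(u_0,v_0,1)$ lies on $C'(A)$, i.e. $p_A(u_0,v_0,1)=0$. Next I would translate the flatness into a statement about tangency. The flat portion is a nondegenerate segment lying on $\{u_0 x + v_0 y + 1 = 0\}$ and contained in $\partial F(A)$; since $F(A)$ is the convex hull of the real affine part of $C(A)$, the two endpoints of this segment are distinct boundary points, each of which is a point of $C(A)$ at which the segment's line is tangent to $C(A)$. Under the duality correspondence, each such tangency point of $C(A)$ on this common line corresponds to a tangent line of $C'(A)$ passing through the common dual point $(u_0,v_0,1)$. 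Because the two endpoints are distinct points of $C(A)$, they yield two distinct tangent lines to $C'(A)$ at $(u_0,v_0,1)$.

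Having produced two distinct tangent lines to $C'(A)$ at the single point $(u_0,v_0,1)$, I would invoke the converse criterion stated in the excerpt: a point of $C'(A)$ admitting two or more tangent lines is a singular point of $p_A$, which means precisely $p_A(u_0,v_0,1)=0$ together with $\bigl(p_u,p_v,p_w\bigr)(u_0,v_0,1)=(0,0,0)$. This is the desired conclusion. The main obstacle I anticipate is the careful bookkeeping of the duality dictionary at the two endpoints: one must be sure that distinct boundary points of $F(A)$ on the same supporting line really do map to \emph{distinct} tangent lines of the dual curve (so that ``two tangents'' genuinely holds and the nondegenerate-segment hypothesis is used), and that degenerate situations — where $C(A)$ might contain the supporting line itself as a component, or where the boundary point is itself a singular point of $C(A)$ — are handled or excluded by the standing assumption that the dual construction is nontrivial. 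I would phrase the tangency argument so that it reads off directly from Kippenhahn's identification of $C(A)$ as dual to $C'(A)$, keeping the computation light and leaning on the general-position facts about dual curves already cited.
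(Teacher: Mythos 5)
Your proposal is correct and follows essentially the same route as the paper's own proof: both arguments observe that the supporting line of the flat portion is tangent to $C(A)$ at two (or more) distinct points, pass through the duality to obtain two distinct tangent lines to $C'(A)$ at $(u_0,v_0,1)$, and conclude singularity of $p_A$ from the non-uniqueness of the tangent line there. Your additional care about identifying the tangency points with the segment's endpoints and about degenerate configurations is a reasonable elaboration but does not change the argument.
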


\begin{proof}  Any flat portion on the boundary of $F(A)$ is a line $L$ defined by
real numbers $u_0$, $v_0$ such that $p_A(u_0, v_0, 1)=0$. Furthermore, $L$ is tangent to two or more points on $C(A)$. Since the dual to the dual is the original curve, these points of tangency are both tangent lines to the dual curve $C'(A)$ at $(u_0,v_0,1)$. Therefore $(u_0,v_0,1)$ is a singular point of
$p_A$ since the tangent line there is not unique.
\end{proof}

Therefore the singularities of $p_A$ help determine how many flat portions are possible on the boundary of $F(A)$. In order to study the flat portions on the boundary of a general nilpotent $4 \times 4$ matrix, we will show that the associated polynomial $p_A$ has a special form where many of the coefficients are either zero or are equal to each other. The points at which singularities occur correspond to equations in a system of linear equations in the coefficients.

Note that if $z=\frac{u+iv}{2}$ and $p_A$ is given by \eqref{pa},  then $p_A(u,v,w)=\det(zA^*+\overline{z} A -(-w)I)$. The latter expression is $q(-w)$ where $q(w)$ is the characteristic polynomial of $zA^*+\overline{z} A$. Applying Newton's identities to this matrix yields the following lemma.

\begin{equation*}\label{evalsrotatedrealA1} \begin{split}
\lambda_1(\theta) & =\frac{1}{2}\left(-a_2-\sqrt{a_1^2+a_3^2-2a_1a_3 \cos\theta}\right),\\
\lambda_2(\theta) & =\frac{1}{2}\left(a_2-\sqrt{a_1^2+a_3^2+2a_1a_3 \cos\theta}\right),\\
\lambda_3 (\theta)& =\frac{1}{2}\left(-a_2+\sqrt{a_1^2+a_3^2-2a_1a_3 \cos\theta}\right),\\
\lambda_4(\theta) & =\frac{1}{2}\left(a_2+\sqrt{a_1^2+a_3^2+2a_1a_3 \cos\theta}\right).
\end{split}\end{equation*}

\begin{lemma}\label{nilpotentbgc} Let $A$ be a $4 \times 4$ nilpotent matrix. The boundary generating curve for $A$ is defined by
\begin{equation*}\begin{split} p_A(u,v,w)&=c_1 u^4+c_2 u^3 v+c_3 u^3w+ (c_1+c_4) u^2 v^2 +c_5 u^2 w^2\\+&c_6 u^2vw+c_2 uv^3+c_3uv^2w+c_4v^4+c_6v^3w+c_5 v^2w^2+w^4,\end{split}\end{equation*} where the coefficients $c_j$ are given below.

 \begin{align*}
 c_1&=-\frac{1}{16} \left( \tr(A^3A^*)+\tr(\left[A^*\right]^3 A)+ \tr(A^2 \left[A^*\right]^2)+\frac{1}{2} \tr(A^* A A^*A)-\frac{1}{2} \left[ \tr(A A^*) \right]^2 \right).\\ \notag
c_2&= \frac{i}{8} \left(\tr(A^3 A^*)- \tr(\left[A^*\right]^3 A) \right). \\ \notag
c_3&= \frac{1}{8} \left(\tr(\left[A^*\right]^2 A)+ \tr(A^2 A^*) \right) \\ \notag
c_4 &= \frac{1}{16} \left( \tr(A^3 A^*) + \tr(\left[A^*\right]^3 A) -\tr(A^2 \left[A^*\right]^2) -\frac{1}{2} \tr(A^* A A^* A) +\frac{1}{2}\left[ \tr(A A^*) \right]^2 \right). \\ \notag
c_5 &= -\frac{1}{4} \tr(A A^*) \\ \notag
c_6 &= \frac{i}{8} \left( \tr( \left[A^*\right]^2 A) -\tr(A^2 A^*)\right) .
\end{align*}

\end{lemma}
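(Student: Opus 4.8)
The plan is to use the factorization noted just before the statement: with $z=(u+iv)/2$ one has $p_A(u,v,w)=q(-w)$, where $q$ is the characteristic polynomial of the Hermitian matrix $M=zA^*+\overline z A$. Writing $q(\lambda)=\lambda^4-e_1\lambda^3+e_2\lambda^2-e_3\lambda+e_4$, the coefficients $e_k$ are the elementary symmetric functions of the eigenvalues of $M$, and Newton's identities express each $e_k$ through the power sums $s_k=\tr(M^k)$. Since the size is even, $q(-w)=w^4+e_1w^3+e_2w^2+e_3w+e_4$, so once $e_1=0$ is verified the problem reduces to computing $e_2,e_3,e_4$ as explicit polynomials in $u,v$ and reading off the monomial coefficients.

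First I would expand $M^k=(zA^*+\overline z A)^k$ and take traces, using throughout that $A$ (hence $A^*$) is nilpotent, so that $\tr(A^j)=\tr((A^*)^j)=0$ for every $j\ge 1$ and $A^4=0$, together with the cyclic invariance of the trace. This gives $s_1=\tr M=0$, hence $e_1=0$ (removing all $w^3$ terms); $s_2=2|z|^2\tr(AA^*)$, hence $e_2=-\tfrac12 s_2=-|z|^2\tr(AA^*)$; and, after grouping the length-three words into cyclic classes, $s_3=3z^2\overline z\,\tr((A^*)^2A)+3z\overline z^2\,\tr(A^2A^*)$, hence $e_3=\tfrac13 s_3$. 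Substituting $z=(u+iv)/2$, using $|z|^2=(u^2+v^2)/4$, and separating the conjugate pair $\tr((A^*)^2A)=\overline{\tr(A^2A^*)}$ into real and imaginary parts then reproduces $c_5$ from $e_2$ and $c_3,c_6$ from $e_3$.

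The main work, and the main obstacle, is the term $e_4=\tfrac18 s_2^2-\tfrac14 s_4$, since $s_4=\tr(M^4)$ requires expanding all sixteen length-four words in $P=zA^*$ and $Q=\overline z A$ and grouping them correctly by cyclic equivalence. The pure words $P^4,Q^4$ vanish by nilpotency; the four words with three $P$'s and one $Q$ each contribute $z^3\overline z\,\tr((A^*)^3A)$, the four with one $P$ and three $Q$'s each contribute $z\overline z^3\,\tr(A^3A^*)$, and the six words with two of each split into a cyclic class of four giving $|z|^4\,\tr(A^2(A^*)^2)$ and a class of two giving $|z|^4\,\tr(A^*AA^*A)$. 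The essential subtlety is that non-commutativity makes $\tr(P^2Q^2)$ and $\tr(PQPQ)$ genuinely distinct traces occurring with different multiplicities ($4$ versus $2$); this is exactly what produces both the $\tr(A^2(A^*)^2)$ and the $\tfrac12\tr(A^*AA^*A)$ contributions seen in $c_1$ and $c_4$, and keeping these straight is where bookkeeping errors would be most likely.

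Finally I would substitute $z=(u+iv)/2$ into $e_4$, using $z^2=\bigl((u^2-v^2)+2iuv\bigr)/4$ and the conjugacy $\tr(A^3A^*)=\overline{\tr((A^*)^3A)}$. The real part of $z^2\tr((A^*)^3A)$ distributes over $u^4,v^4,u^2v^2$ and the imaginary part over $u^3v,uv^3$, so matching monomials yields $c_1$ as the $u^4$-coefficient, $c_4$ as the $v^4$-coefficient, $c_1+c_4$ as the $u^2v^2$-coefficient, and the common $u^3v,uv^3$ coefficient equal to $c_2=\tfrac{i}{8}\bigl(\tr(A^3A^*)-\tr((A^*)^3A)\bigr)$. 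This matches the claimed polynomial term by term and completes the identification; the remaining steps are routine once the four quartic traces have been correctly isolated together with their multiplicities.
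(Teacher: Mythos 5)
Your proposal is correct and follows essentially the same route as the paper: writing $p_A(u,v,w)=q(-w)$ for the characteristic polynomial $q$ of $zA^*+\overline{z}A$ with $z=(u+iv)/2$, using nilpotency to kill $\tr(A^k)$ and $\tr((A^*)^k)$, applying Newton's identities to get $e_2=-\tfrac12 s_2$, $e_3=\tfrac13 s_3$, $e_4=\tfrac18 s_2^2-\tfrac14 s_4$, and expanding the traces of powers word by word --- including the correct split of the six balanced length-four words into a cyclic class of four for $\tr((A^*)^2A^2)$ and a class of two for $\tr(A^*AA^*A)$, which is exactly the bookkeeping the paper carries out. The final substitution $z=(u+iv)/2$ and matching of monomial coefficients is likewise identical to the paper's computation.
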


Proof of Lemma~\ref{nilpotentbgc}:  Let $M$ be an $n \times n$ matrix with characteristic polynomial $$q(w)= \sum_{j=0}^n (-1)^j q_j w^{n-j}.$$ By Newton's Identities (see \cite{Bake}), if $q_0=1$, then the remaining coefficients ($m=1, \ldots, n$) satisfy

\[ 
(-1)^m q_m=-\left( \frac{1}{m} \right) \sum_{j=0}^{m-1} (-1)^j  \tr \left( M^{m-j} \right) q_j.
\]

Applying these identities to $M=zA^*+\overline{z} A$ will yield the coefficients of the polynomial
$$q(w)=q_0 w^4-q_1 w^3+q_2 w^2-q_3 w+q_4$$ where each $q_j$ will be a polynomial in $u$ and $v$. The polynomial $p_A$ will then be defined by
$p_A(u,v,w)=q(-w)=q_0 w^4+q_1 w^3+q_2 w^2+q_3 w+q_4.$

Note that since $A$ is nilpotent, $\tr(A^k)=\tr([A^*]^k)=0$ for $k=1, 2, 3, 4$. The calculations below are also simplified with the identity $\tr (M N)=\tr(N M)$ for all $n \times n$ matrices $M$ and $N$.

Thus $q_0=1$ and $q_1=\tr(z A^* + \overline{z} A) q_0=0$. Next,

\begin{equation*}
q_2=-\frac{1}{2} \left( \tr([z A^* + \overline{z} A]^2) q_0-\tr[z A^* + \overline{z} A] q_1\right) =- \frac{1}{2} 2|z|^2 \tr(A A^*)= -\frac{1}{4} (u^2 +v^2) \tr(A A^*) .
\end{equation*}

\begin{align*}
q_3&=\frac{1}{3} \left( \tr([z A^* + \overline{z} A]^3) q_0-\tr([z A^* + \overline{z} A]^2) q_1+ \tr(z A^* + \overline{z} A) q_2 \right) \\ \notag
&=\frac{1}{3}\left( \tr([z A^* + \overline{z} A]^3) -\tr([z A^* + \overline{z} A]^2) 0 + 0 q_2 \right) \\ \notag
&=\frac{1}{3}   \tr([z A^* + \overline{z} A]^3) \\ \notag
&=\frac{1}{3} \left( z^3 \tr[A^*]^3+ z^2 \overline{z} \tr([A^*]^2A+ A [A^*]^2 + A^* A A^*)\right) \\ \notag
& +\frac{1}{3} \left( \overline{z}^2 z \tr( A A^* A + A^2 A^* + A^* A^2) + \overline{z}^3 \tr(A^3)\right) \\ \notag
&=z^2 \overline{z} \tr([A^*]^2A)+ \overline{z}^2 z \tr(  A^2 A^*) \\ \notag
&=\left(\frac{u^3+iu^2v +uv^2+iv^3}{8}\right)   \tr([A^*]^2A)+\left(\frac{u^3-iu^2v+uv^2-iv^3}{8} \right) \tr(  A^2 A^*) \\ \notag
&=\frac{u^3}{8} (\tr([A^*]^2A)+\tr(  A^2 A^*)) + \frac{u^2v}{8} (i   \tr([A^*]^2A)-i \tr(  A^2 A^*) )  \\ \notag
&+ \frac{uv^2}{8} (\tr([A^*]^2A)+\tr(  A^2 A^*))+ \frac{v^3}{8}  (i   \tr([A^*]^2A)-i \tr(  A^2 A^*) ). \notag
\end{align*}

Finally,

\begin{align*}
q_4&=-\frac{1}{4} \left\{ \tr([z A^* + \overline{z} A]^4 )q_0-\tr([z A^* + \overline{z} A]^3) q_1+ \tr([z A^* + \overline{z} A]^2) q_2 -\tr(z A^* + \overline{z} A) q_3 \right\} \\ \notag
&=-\frac{1}{4} \left\{ \tr([z A^* + \overline{z} A]^4) + \tr([z A^* + \overline{z} A]^2) q_2 \right\} \\ \notag
&=-\frac{1}{4} \left\{ \tr([z A^* + \overline{z} A]^4) + 2|z|^2 \tr(A A^*) (- |z|^2 \tr(A A^*) )  \right\} \\ \notag
&=-\frac{1}{4} \left\{  \tr([z A^* + \overline{z} A]^4 - 2 |z|^4 \left[ \tr(A A^*)  \right]^2 \right\} \\ \notag
&=-\frac{1}{4} \left\{ z^4 \tr([A^*]^4) + z^3  \overline{z} \ 4 \tr([A^*]^3 A) + |z|^4 ( 4 \tr([A^*]^2 A^2) + 2 \tr(A A^* A A^*) -2 [\tr(A A^*)]^2) \right. \\ \notag
&+ \left. \overline{z}^3 z \ 4 \tr(A^3 A^*) + \overline{z}^4 \tr(A^4)\right\}\\ \notag
&= -\frac{1}{64} \left\{ (u+iv)^3 (u-iv) 4 \tr([A^*]^3 A) +(u-iv)^3(u+iv)  4 \tr(A^3 A^*) \right.\\ \notag
&+ \left.(u^2+v^2)^2 ( 4 \tr([A^*]^2 A^2) + 2 \tr(A A^* A A^*)  -2 [\tr(A A^*)]^2) \right\} \\ \notag
&=  -\frac{1}{64} \left\{ (u^4+2 i u^3 v+ 2iu v^3-v^4) 4 \tr([A^*]^3 A) +(u^4-2 i u^3 v- 2iu v^3-v^4) 4 \tr(A^3 A^*) \right. \\ \notag
&+ \left. (u^4+ 2 u^2v^2 +v^4)( 4 \tr([A^*]^2 A^2) + 2 \tr(A A^* A A^*) -2 [\tr(A A^*)]^2)\right\} \\ \notag
\end{align*}
\begin{align*}
&=u^4 \left(\frac{- 2\tr([A^*]^3 A)- 2 \tr(A^3 A^*)-2\tr([A^*]^2 A^2) -\tr(A A^* A A^*)+[\tr(A A^*)]^2}{32}\right) \\ \notag
&+ u^3 v\left(\frac{-i \tr([A^*]^3 A)+i\tr(A^3 A^*) }{8}\right)+ u^2 v^2 \left(\frac{-2\tr([A^*]^2 A^2) -\tr(A A^* A A^*)+[\tr(A A^*)]^2}{16}\right) \\ \notag
&+ uv^3 \left(\frac{i \tr(A^3 A^*)-i\tr([A^*]^3 A) }{8}\right) \\ \notag
&+ v^4 \left(\frac{2\tr([A^*]^3 A)+2 \tr(A^3 A^*) -2\tr([A^*]^2 A^2) -\tr(A A^* A A^*)+[\tr(A A^*)]^2 }{32}\right). \notag
\end{align*}

Now $p_A(u,v,w)=w^4+0w^3+q_2 w^2+ q_3 w +q_4, $ and from this expression we can identity the coefficients of each of the degree 4 homogeneous terms in $u$, $v$, and $w$ as stated in the lemma.

The $w^4$ term has coefficient 1 and all of the terms containing $w^3$ have coefficient 0.

 The terms containing $w^2$ are obtained from $q_2$ and clearly the $u^2 w^2$ and $v^2w^2$ coefficients are both $c_5=-\tr(A A^*)/4,$ while there is no $u v w^2$ term.

 The terms containing $w$ are obtained from $q_3$. Note that the coefficients of $u^3 w$ and $uv^2 w$ are equal to each other with the value $c_3$, while the coefficients of $u^2vw$ and $v^3w$ are equal to each other with the value $c_6$.

For the terms without $w$, note that $c_1$ is the coefficient of $u^4$ in $q_4$ and $c_4$ is the coefficient of $v^4$ in $q_4$. In addition, the coefficient of $u^2 v^2$ in $q_4$ is exactly $c_1+c_4$. Finally, the coefficients of $u^3v$ and $uv^3$ are both equal to $c_2$. $\square$

Now we consider the condition where $p_A$ has a singularity.

\begin{lemma}\label{singularity} The homogeneous polynomial  $p_A$ from Lemma~\ref{nilpotentbgc}  has a singularity at $(u,v,w)$ if and only if

\begin{align}\label{gensings}
(4u^3+2 uv^2) c_1+ (3 u^2v+v^3) c_2+ (3u^2w+ v^2w) c_3+ (2 uv^2) c_4+ (2uw^2) c_5+ (2 uvw) c_6 &=0. \\ \notag
(2u^2v) c_1+(u^3+ 3uv^2) c_2+ (2uvw )c_3+ (2u^2v+4v^3) c_4+ (2vw^2) c_5 +(u^2w+3v^2w)c_6&=0 \\ \notag
(u^3+uv^2) c_3+ (2u^2w+2v^2w)c_5+(u^2v+v^3) c_6+4w^3&=0\notag
\end{align}
\end{lemma}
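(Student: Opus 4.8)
The plan is to read off the three displayed equations as precisely the components of the gradient $\nabla p_A = (p_u, p_v, p_w)$, using the explicit twelve-term expression for $p_A$ recorded in Lemma~\ref{nilpotentbgc}. By the definition of a singular point reviewed just above, $(u,v,w)$ is a singularity of $p_A$ exactly when it lies on the curve $p_A=0$ and in addition $p_u=p_v=p_w=0$. So the task reduces to differentiating and grouping terms, once we clarify why only three equations appear rather than four.

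The one conceptual step worth isolating is that the condition $p_A(u,v,w)=0$ is redundant once the three partials vanish. Indeed, $p_A$ is homogeneous of degree $4$, so Euler's identity gives $4\,p_A = u\,p_u + v\,p_v + w\,p_w$. Hence whenever $p_u=p_v=p_w=0$ we automatically have $p_A=0$, and the system of four conditions defining a singular point collapses to the three gradient equations alone. This is why the lemma records only the three equations in \eqref{gensings}.

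It then remains to differentiate the polynomial of Lemma~\ref{nilpotentbgc} term by term and to collect the resulting monomials according to which coefficient $c_j$ they multiply. Differentiating with respect to $u$ produces $c_1(4u^3+2uv^2)+c_2(3u^2v+v^3)+c_3(3u^2w+v^2w)+c_4(2uv^2)+c_5(2uw^2)+c_6(2uvw)$, which is the first equation; here the $c_1$ column collects contributions from both the $c_1u^4$ term and the shared $(c_1+c_4)u^2v^2$ term, so the coefficient $c_1+c_4$ of $u^2v^2$ correctly splits between the $c_1$ and $c_4$ columns. Differentiating with respect to $v$ and then $w$ in the same way yields the second and third equations; the $w$-derivative is the simplest, since only the seven monomials of $p_A$ that contain $w$ survive, giving $c_3(u^3+uv^2)+c_5(2u^2w+2v^2w)+c_6(u^2v+v^3)+4w^3$.

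There is no genuine obstacle here: the substantive content is the explicit form of $p_A$ already established, and what remains is a mechanical gradient computation. The only points demanding care are the homogeneity reduction via Euler's identity, which removes the apparent fourth equation $p_A=0$, and the bookkeeping for the $(c_1+c_4)u^2v^2$ term, whose $u$- and $v$-derivatives must be distributed correctly across the $c_1$ and $c_4$ entries of the system.
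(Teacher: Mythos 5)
Your proposal is correct and follows essentially the same route as the paper: the paper's proof likewise just equates the three displayed equations with $p_u=p_v=p_w=0$ computed from the explicit form of $p_A$ in Lemma~\ref{nilpotentbgc}. Your additional observation that Euler's identity $4p_A=up_u+vp_v+wp_w$ makes the condition $p_A(u,v,w)=0$ redundant is a point the paper leaves implicit, and it is exactly what is needed for the ``if'' direction given the paper's definition of a singular point as lying on the curve.
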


\begin{proof} By Lemma~\ref{singatflat} he polynomial $p_A$ has a singularity at a point $(u,v,w)$ if and only if
$$\frac{\partial}{\partial u} p_A(u,v,w)=\frac{\partial}{\partial v} p_A(u,v,w)=\frac{\partial}{\partial w} p_A(u,v,w)=0.$$

Using the form of $p_A$ from Lemma~\ref{nilpotentbgc} yields the equations in \eqref{gensings}.
\end{proof}

When $w=1$, the system \eqref{gensings} becomes the non-homogeneous system

\begin{align}\label{affinesystem}
(4u^3+2 uv^2) c_1+ (3 u^2v+v^3) c_2+ (3u^2+v^2) c_3+ (2 uv^2) c_4+ (2u) c_5+ (2 uv) c_6 &=0. \\ \notag
(2u^2v) c_1+(u^3+ 3uv^2) c_2+ (2uv )c_3+ (2u^2v+4v^3) c_4+ (2v) c_5 +(u^2+3v^2)c_6&=0. \\ \notag
(u^3+uv^2) c_3+ (2u^2+2v^2)c_5+(u^2v+v^3) c_6&=-4,\notag
\end{align}
from which the following special case is immediate.

\begin{lemma}\label{singularityspec} The polynomial $p_A$ has a singularity at $(2,0,1)$ if and only if

\begin{align*}
8c_1+ 3c_3+c_5 &=0. \\ \notag
2c_2+c_6&=0. \\ \notag
8c_3+8c_5&=-4. \notag
\end{align*}
\end{lemma}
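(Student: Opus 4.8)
The plan is to obtain this statement as a direct specialization of the affine singularity system \eqref{affinesystem}, which already incorporates the normalization $w=1$ coming from Lemma~\ref{singularity}. Concretely, I would substitute the coordinates $u=2$ and $v=0$ into each of the three equations of \eqref{affinesystem} and then clear the common numerical factors. Since \eqref{affinesystem} is precisely the condition that all three first-order partials of $p_A$ vanish at $(u,v,1)$, evaluating it at $(2,0)$ gives exactly the singularity condition at $(2,0,1)$.

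First I would handle the first equation of \eqref{affinesystem}. Setting $v=0$ annihilates every term carrying a factor of $v$, leaving only the contributions attached to $c_1$, $c_3$, and $c_5$; evaluating the surviving coefficients $4u^3+2uv^2$, $3u^2+v^2$, and $2u$ at $u=2$ gives $32$, $12$, and $4$ respectively, so the equation collapses to $32c_1+12c_3+4c_5=0$. Dividing by $4$ yields $8c_1+3c_3+c_5=0$. Next I would treat the second equation: again $v=0$ eliminates all terms except those multiplying $c_2$ and $c_6$, whose coefficients $u^3+3uv^2$ and $u^2+3v^2$ evaluate to $8$ and $4$ at $u=2$, so the equation becomes $8c_2+4c_6=0$, i.e.\ $2c_2+c_6=0$ after dividing by $4$. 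Finally, in the third equation the $c_6$ term vanishes with $v=0$, while $u^3+uv^2$ and $2u^2+2v^2$ both evaluate to $8$, producing $8c_3+8c_5=-4$ directly.

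Since each of these three reductions is reversible — dividing or multiplying by the nonzero constant $4$ introduces no loss of equivalence — the conjunction of the three displayed equations is equivalent to the vanishing of all three partials of $p_A$ at $(2,0,1)$, which by Lemma~\ref{singularity} is exactly the condition for $p_A$ to be singular there. There is essentially no substantive obstacle: the only point requiring minor care is to confirm that setting $v=0$ eliminates precisely the intended terms in each of the three equations, and that the surviving integer coefficients are computed without arithmetic slips, after which the claimed simplification is immediate.
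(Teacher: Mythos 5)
Your proposal is correct and matches the paper's own derivation: the paper obtains Lemma~\ref{singularityspec} as an immediate specialization of the system \eqref{affinesystem} at $(u,v)=(2,0)$, exactly as you do, and your evaluated coefficients ($32,12,4$; $8,4$; $8,8$) are all right. No gaps.
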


The above condition is necessary for $F(A)$ to have a flat portion at $x=-1/2$. This system can be rewritten as

\begin{equation}\label{cs}
\left\{
\begin{aligned}
c_3 &=-4c_1+\frac{1}{4}. \\
c_6&=-2c_2. \\
c_5&=4c_1-\frac{3}{4}.
\end{aligned}
\right.
\end{equation}

We can use this necessary condition to eliminate certain possibilities involving other flat portions.

\begin{theorem}\label{twoflat} If $A$ is a 4-by-4 unitarily irreducible nilpotent matrix, then $\partial F(A)$ has at most two flat portions.
\end{theorem}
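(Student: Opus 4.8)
The overall plan is to convert the count of flat portions into a count of real singular points of the boundary generating quartic and then to exploit the very rigid shape that Lemma~\ref{nilpotentbgc} forces on that quartic. By Lemma~\ref{singatflat}, each flat portion of $\partial F(A)$ yields a singular point of $p_A=0$, and distinct supporting lines have distinct line coordinates, so distinct flat portions give distinct singularities. Moreover every coefficient $c_1,\dots,c_6$ in Lemma~\ref{nilpotentbgc} is real (each trace combination is invariant under complex conjugation), so $p_A$ is an honest real quartic; an irreducible plane quartic has at most three nodes, which already matches the bound of Theorem~\ref{4x4}. Hence, after excluding the reducible situations via Proposition~\ref{p:red}, the genuine content of the theorem is to rule out exactly three flat portions.

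First I would normalize. Arguing by contradiction, suppose $\partial F(A)$ has at least three flat portions. Multiplying $A$ by a unimodular and then by a positive scalar --- operations that preserve nilpotency, unitary irreducibility, and the number of flat portions --- I may assume one flat portion lies on the line $x=-\tfrac12$, i.e.\ that $p_A$ has a singularity at $(2,0,1)$. Lemma~\ref{singularityspec} then gives the relations \eqref{cs}. The key structural observation, which is just a regrouping of Lemma~\ref{nilpotentbgc}, is that for every nilpotent $A$
\[ p_A(u,v,w)=(u^2+v^2)\,R(u,v,w)+w^4,\qquad R=c_1u^2+c_2uv+c_4v^2+c_3uw+c_6vw+c_5w^2. \]
In particular $p_A=0$ passes through the two circular points at infinity $(1,\pm i,0)$. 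Writing $Q=u^2+v^2$ and using $p_A=QR+w^4$, the singularity system \eqref{gensings} at an affine point with $w=1$ collapses, on the curve, to the three scalar equations $R_u=2u/Q^2$, $R_v=2v/Q^2$, $R_w=-4/Q$, with the relations \eqref{cs} expressing exactly that $(2,0)$ is one solution.

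Next I would locate the remaining singular points. Eliminating the quantities $1/Q$ and $1/Q^2$ among the three equations above produces two conic equations in $(u,v)$, each passing through the known node $(2,0)$; every further flat portion must therefore sit at a common zero of these two conics, of which, by B\'ezout, there are at most three besides $(2,0)$. The decisive split is whether $c_2=0$. If $c_2=0$ (equivalently $c_6=0$), then $p_A$ is even in $v$, so $F(A)$ is symmetric about the real axis and any non-horizontal flat portion is accompanied by its mirror image; three flat portions would then force a fourth, contradicting Theorem~\ref{4x4} together with the irreducibility of $A$. If $c_2\neq0$, the two conics are nondegenerate and I would analyze their intersection directly in terms of the three remaining real parameters $c_1,c_2,c_4$, the goal being to show that two of the three candidate portions are forced to share either a slope or a distance from the origin. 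In the first event Corollary~\ref{co:onlytwo} applies; in the second, Proposition~\ref{p:nonpa} applies; either way those two are the \emph{only} flat portions, contradicting the presence of a third.

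The main obstacle is precisely this last computation: solving the two-conic intersection and certifying which of the (at most three) extra common zeros are genuine real nodes giving flat portions, as opposed to complex points, points at infinity, or higher-order singularities that contribute no segment to $\partial F(A)$. The algebra is heavy and the crux is to show that a bona fide third node cannot survive without producing one of the symmetric configurations already settled in Section~\ref{s:two}. I would manage this by recording, for each candidate node $(u_0,v_0)$, the slope $-u_0/v_0$ and the distance $1/\sqrt{u_0^2+v_0^2}$ of its supporting line, and by using the two conic relations to force a coincidence of one of these quantities for some pair of the three portions --- which is exactly the hypothesis needed to invoke Corollary~\ref{co:onlytwo} or Proposition~\ref{p:nonpa} and reach the contradiction.
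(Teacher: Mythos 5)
Your framework is the paper's: convert flat portions to singularities of $p_A$ via Lemma~\ref{singatflat}, normalize so one singularity sits at $(2,0,1)$, use Lemma~\ref{singularityspec} to reduce to the three free coefficients $c_1,c_2,c_4$, and aim to show that any further singularity forces a parallel or equidistant pair so that Corollary~\ref{co:onlytwo} or Proposition~\ref{p:nonpa} finishes. The regrouping $p_A=(u^2+v^2)R+w^4$ with $R=c_1u^2+c_2uv+c_4v^2+c_3uw+c_6vw+c_5w^2$ is correct and tidies the gradient conditions nicely. But the proposal stops exactly where the theorem actually lives. You write that ``the main obstacle is precisely this last computation'' and describe what you would like it to show; the paper's proof \emph{is} that computation. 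Concretely: substituting \eqref{cs} into \eqref{affinesystem} gives the system \eqref{gensecondsing} in $c_1,c_2,c_4$, whose row reduction shows a second singularity $(u,v,1)$ with $uv\ne0$ can exist only if $u^2+v^2=4$ (equidistant, so Proposition~\ref{p:nonpa} applies) or $v^2=\frac{u(u-2)^2}{4-u}$; and then, for two candidate singularities on the latter cubic, the consistency of the stacked $4\times4$ system forces $\det M=0$, which after squaring and substituting $\frac{(u_j-2)^2}{v_j^2}=\frac{4-u_j}{u_j}$ collapses to the identity $u_1(4-u_2)(u_1u_2-u_1-3u_2)^2-u_2(4-u_1)(u_1u_2-u_2-3u_1)^2=4(u_1-u_2)^3$, forcing $u_1=u_2$ and hence equidistance again. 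Nothing in your proposal rules out a genuine third node; you have only announced the intention to do so.

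Two secondary points. First, the claim that eliminating $1/Q$ and $1/Q^2$ yields ``two conic equations'' with at most three further intersections by B\'ezout is unsubstantiated and appears wrong as stated: from $R_w=-4/Q$ one gets $8R_u=uR_w^2$ and $8R_v=vR_w^2$, which are cubics, and the constraint $(u^2+v^2)R_w=-4$ is also a cubic; only the combination $vR_u-uR_v=0$ is a conic. (Indeed the paper's locus $v^2=\frac{u(u-2)^2}{4-u}$ is a cubic curve, not a conic.) Second, in the $c_2=0$ case your assertion that three flat portions force a fourth by mirror symmetry overlooks the configuration of one self-symmetric vertical flat portion bisected by the real axis plus one mirror pair, which has odd cardinality. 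That case is still salvageable --- the lines $ux\pm vy+1=0$ of a mirror pair are automatically equidistant from the origin, so Proposition~\ref{p:nonpa} applies to the pair --- but as written the step fails.
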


\begin{proof} Assume $A$ is a 4-by-4 unitarily irreducible nilpotent matrix. The associated polynomial $p_A$ thus has the form given by Lemma~\ref{nilpotentbgc}. If there is at least one flat portion on the boundary of $F(A)$, we may rotate and scale $A$ so that there is a flat portion on the line $x=-\frac{1}{2}$. This flat portion corresponds to a singularity $(2,0,1)$ so the system \eqref{cs} is satisfied. Thus only the variables $c_1$, $c_2$, and $c_4$ are free.

Assume there is another flat portion on the line $ux+vy+1=0$. By Lemma~\ref{singatflat} there is a singularity at this $(u,v,1)$ where $(u,v) \ne (2,0)$. For any such singularity, we can eliminate $c_3$, $c_5$ and $c_6$ in the necessary equations \eqref{affinesystem} to obtain the new consistent system below.

\begin{equation}\label{gensecondsing}
\left\{
\begin{aligned}
(4u^3+2 uv^2+8u-12u^2-4v^2) c_1+ (3 u^2v+v^3-4uv) c_2+  (2 uv^2) c_4 &=-\frac{1}{4}v^2+\frac{3}{2}u-\frac{3}{4}u^2. \\
(2u^2v-8uv+8v) c_1+ (u^3+ 3uv^2-2u^2-6v^2) c_2+ (2u^2v+4v^3) c_4&=-\frac{1}{2}uv+\frac{3}{2}v. \\
4(2-u)(u^2+v^2) c_1-2v (u^2+v^2)c_2&=-4+\left(\frac{6-u}{4}\right)(u^2+v^2).
\end{aligned}
\right.
\end{equation}

If $v=0$ and $u \ne 2$ for the singular point $(u,v,1)$, then the corresponding flat portion is on a vertical line $x=-\frac{1}{u}$ and there are two parallel flat portions which must be the only flat portions by Corollary~\ref{co:onlytwo}. If $u=0$ at the singularity, then the system above is consistent if and only if $v= \pm 2$. The point $(0, 2)$ could only correspond to a flat portion on the line $y=- \frac{1}{2}$ and the point $(0,-2)$ could only correspond to a flat portion on $y=\frac{1}{2}$. Each of these support lines
is at a distance of $\frac{1}{2}$ from the origin. Therefore Proposition~\ref{p:nonpa} shows that if there are flat portions both on $x=-\frac{1}{2}$ and on either $y=\frac{1}{2}$ or $y=-\frac{1}{2}$, then there will only be these two flat portions. Therefore in the remainder of the argument, we will assume that any singular points satisfy $u \ne 0$ and $v \ne 0$.

To simplify row reductions in \eqref{gensecondsing}, put $c_4$ in the first column and $c_1$ in the third column. If the resulting matrix is row reduced using only the extra assumption that neither $u$ nor $v$ is zero then we get
the matrix

$$\left(
\begin{array}{cccc}
 2 u v^2 & v^3+u (3 u-4) v & 2 (u-2) \left(v^2+2 (u-1) u\right) & -\frac{v^2}{4}-\frac{3}{4} (u-2) u \\
 0 & -2 v & 8-4 u & -\frac{u}{4}+\frac{3}{2}-\frac{4}{u^2+v^2} \\
 0 & -2v(u^2+v^2-u) & 4(2-u)(u^2+v^2-u) & \frac{3}{4}u^2+\frac{1}{2} v^2-\frac{3}{2}u \\
\end{array}
\right).$$
If $u^2+v^2-u=0$, the system described above is inconsistent unless $\frac{3}{4}u^2+\frac{1}{2} v^2-\frac{3}{2}u=0$, but the combination of those equations implies $u=0$ which has already been ruled out. Therefore we may assume $u^2+v^2-u \ne 0$ and thus obtain the row-equivalent matrix

\begin{equation}\label{threebyfour}\left(
\begin{array}{cccc}
 2 u v^2 & v^3+u (3 u-4) v & 2 (u-2) \left(v^2+2 (u-1) u\right) & -\frac{v^2}{4}-\frac{3}{4} (u-2) u \\
 0 &  v & 2(u-2) & \frac{u}{8}-\frac{3}{4}+\frac{2}{u^2+v^2} \\
 0 & 0 & 0 & \frac{\left(u^2+v^2-4\right) \left(u (u-2)^2+(u-4) v^2\right)}{4 \left(u^2+v^2\right)} \\
\end{array}
\right).
\end{equation}

The matrix \eqref{threebyfour} corresponds to an inconsistent system unless either $u^2+v^2=4$ or $v^2=\frac{u(u-2)^2}{4-u}$.  Any point $(u,v)$ corresponding to a flat portion must satisfy at least one of these conditions so if there are two flat portions besides the one on $x=-\frac{1}{2}$, both must satisfy at least one of these conditions. When $u^2+v^2=4$ the line $ux+vy+1=0$ is a distance of $\frac{1}{2}$ from the origin, which is the same as the line $x=-\frac{1}{2}$ containing the original flat portion. Consequently if there is a singularity with $u^2+v^2=4$, then the corresponding line contains the only other possible flat portion by Proposition~\ref{p:nonpa}.

Therefore, there could only be three flat portions if there are two different pairs $(u_1, v_1)$ and $(u_2, v_2)$ that form a $6 \times 4$ augmented matrix that is consistent and where each $(u_j, v_j)$ pair satisfies $v^2=\frac{u(u-2)^2}{4-u}$.

For a given singularity $(u,v,1)$ with $v^2=\frac{u(u-2)^2}{4-u}$, lengthy calculations show that the matrix \eqref{threebyfour} is row equivalent to

$$\left(
\begin{array}{cccc}
 1 & 0 &\frac{u-4}{u} & \frac{(u-6)(u-4)}{8u^2} \\
 0 & 1 & \frac{2 (u-2)}{v} & \frac{(u-2)(u-8)}{8 u v} \\
 0 & 0 &0 &0\\
\end{array}
\right).$$

Therefore, if there are three flat portions on $\partial F(A)$, then there exist points $(u_1,v_1)$ and $(u_2, v_2)$ with neither $u_i=0$ nor $v_i=0$ for $i=1,2$ such that
the matrix $M$ below corresponds to a consistent system, and consequently satisfies
$\det(M)=0$.

$$M=\left(
\begin{array}{cccc}
 1 & 0 &\frac{u_1-4}{u_1} & \frac{(u_1-6)(u_1-4)}{8u_1^2} \\
 0 & 1 & \frac{2 (u_1-2)}{v_1} & \frac{(u_1-2)(u_1-8)}{8 u_1 v_1} \\
 1 & 0 &\frac{u_2-4}{u_2} & \frac{(u_2-6)(u_2-4)}{8u_2^2} \\
 0 & 1 & \frac{2 (u_2-2)}{v_2} & \frac{(u_2-2)(u_2-8)}{8 u_2 v_2} \\
\end{array}
\right).$$

Note that $M$ has the form

$$M=\left(
\begin{array}{cccc}
 1 & 0 &a_1 & a_1c_1 \\
 0 & 1 & b_1 & b_1 d_1 \\
 1 & 0 &a_2 & a_2c_2 \\
 0 & 1 & b_2 & b_2d_2 \\
\end{array}
\right),$$
from which it follows that

$$\det(M)=b_2\left( (a_2-a_1)d_2+a_1c_1-a_2c_2\right)+b_1\left( (a_1-a_2)d_1+a_2c_2-a_1c_1\right).$$

Therefore,
\begin{align*}
\det(M)&= \frac{2 (u_2-2)}{v_2} \left(\left(\frac{u_2-4}{u_2}-\frac{u_1-4}{u_1}\right) \frac{(u_2-8)}{16 u_2 } + \frac{(u_1-6)(u_1-4)}{8u_1^2}- \frac{(u_2-6)(u_2-4)}{8u_2^2}\right)\\
&+
\frac{2 (u_1-2)}{v_1}\left( \left(\frac{u_1-4}{u_1}-\frac{u_2-4}{u_2}\right) \frac{(u_1-8)}{16 u_1 } + \frac{(u_2-6)(u_2-4)}{8u_2^2}- \frac{(u_1-6)(u_1-4)}{8u_1^2}\right).
\end{align*}

Simplifying and removing the common factor $\frac{(u_1-u_2)}{4 u_1^2 u_2^2}$ from both terms in parentheses shows that

\begin{align*}
\det(M)&=\frac{(u_1-u_2)}{4 u_1^2 u_2^2} \left[ \frac{2 (u_2-2)}{v_2} \left(-u_1(u_2-8)-12u_1-12u_2+5u_1u_2\right) \right.\\
&+\left. \frac{2 (u_1-2)}{v_1}\left(u_2(u_1-8)+12u_1+12u_2-5u_1u_2\right)\right].
\end{align*}

If $u_1=u_2$, then $v_1^2=\frac{u_1(u_1-2)^2}{4-u_1}=\frac{u_2(u_2-2)^2}{4-u_2}=v_2^2$ and hence the singular points $(u_1, v_1)$ and $(u_1, v_2)$ result in flat portions that are the same
distance $1/\sqrt{u_1^2+v_1^2}$ from the origin. Therefore these two flat portions cannot coincide with the original flat portion at $x=-1/2$ by Proposition~\ref{p:nonpa}.
So the only remaining case that could lead to three flat portions on the boundary of $F(A)$ is if $\det(M)=0$ because

\begin{equation}\label{deteq}
 \frac{(u_2-2)}{v_2}\left(u_1u_2-u_1-3u_2\right)= \frac{ (u_1-2)}{v_1}\left(u_1u_2-u_2-3u_1\right).
\end{equation}

Squaring both sides of \eqref{deteq} and replacing $ \frac{(u_j-2)^2}{v_j^2}$ with
$\frac{4-u_j}{u_j}$ for $j=1,2$ results in

$$\frac{(4-u_2)}{u_2}\left(u_1u_2-u_1-3u_2\right)^2= \frac{ (4-u_1)}{u_1}\left(u_1u_2-u_2-3u_1\right)^2,$$
and this implies that

$$u_1(4-u_2)(u_1u_2-u_1-3u_2)^2-u_2(4-u_1)(u_1u_2-u_2-3u_1)^2=0.$$
However, the left side of the expression above is $4(u_1-u_2)^3$, and as mentioned previously, $u_1=u_2$ leads to a contradiction of Proposition~\ref{p:nonpa}.
\end{proof}

\section{Case of unitarily reducible 5-by-5 matrices} \label{s:5x5}

With Theorem~\ref{twoflat} at our disposal, it is not difficult to describe completely the situation with the flat portions on the boundary of $F(A)$ for nilpotent $5$-by-$5$ matrices $A$, provided
that they are unitarily reducible.

\begin{theorem}\label{th:5x5red} Let a $5$-by-$5$ matrix $A$ be nilpotent and unitarily reducible. Then there are at most two flat portions on the boundary of its numerical range. Moreover, any number from $0$ to $2$
is actually attained by some such matrices $A$. \end{theorem}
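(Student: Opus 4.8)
The plan is to exploit the block structure of a unitarily reducible matrix and reduce everything to one genuinely planar configuration. Writing $A$ (up to unitary similarity) as $B_1\oplus\cdots\oplus B_k$ with $k>1$ and each $B_j$ nilpotent, $F(A)=\mathrm{conv}\bigl(\bigcup_j F(B_j)\bigr)$. Since $0\in F(B_j)$ for every nilpotent $B_j$, each $1$-by-$1$ block $[0]$ contributes nothing new, so it suffices to treat the block-size patterns $4+1$, $3+2$, $3+1+1$, $2+2+1$, $2+1+1+1$. When the largest block is a $2$-by-$2$ (patterns $2+2+1$, $2+1+1+1$) the set $F(A)$ is a disk centered at the origin, with no flat portion; when it is $4+1$ we have $F(A)=F(B_1)$ and Theorem~\ref{twoflat} together with Proposition~\ref{p:red} give at most two; when it is $3+1+1$ we have $F(A)=F(B_1)$ with $B_1$ a $3$-by-$3$ nilpotent, which by \cite[Theorem~4.1]{KRS} has at most one flat portion. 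Thus the only substantive case is $3+2$, i.e. $A$ unitarily similar to $B_1\oplus B_2$ with $B_1$ a $3$-by-$3$ nilpotent and $B_2$ a $2$-by-$2$ nilpotent; and if $B_1$ is itself reducible then $E:=F(B_1)$ is again a concentric disk and $F(A)$ is a disk, so I may assume $B_1$ unitarily irreducible.

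For this configuration set $E=F(B_1)$ and let $D=F(B_2)$ be the closed disk of radius $r$ centered at the origin, so that $F(A)=\mathrm{conv}(E\cup D)$. I would count flat portions through the support function $h(\theta)=\max\{h_E(\theta),r\}$, where $h_E(\theta)=\lambda_{\max}(\re(e^{-i\theta}B_1))$ and the support function of $D$ is the constant $r$. In the plane a proper flat portion corresponds exactly to a (convex) corner of the support function, so the flat portions of $F(A)$ are of two kinds: a surviving edge of $E$, a corner of $h_E$ at an angle where $h_E>r$, of which there is at most one since $E$ has at most one flat portion; and ``bridge'' segments tangent to $D$ and supporting $E$, which occur precisely at the transversal solutions of $h_E(\theta)=r$. (That a bridge really is a flat portion of $F(A)$, i.e. multiply generated, is clear for a direct sum: its interior points are proper convex combinations of boundary points of $F(B_1)$ and of $F(B_2)$ on a common supporting line.) Everything therefore reduces to: how many solutions does $h_E(\theta)=r$ have, and where does the edge of $E$ sit relative to the level $r$?

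The key step, which I expect to carry the main computational weight, is to make $h_E$ explicit. Putting $B_1$ in upper-triangular form $\left[\begin{smallmatrix}0&a&b\\0&0&c\\0&0&0\end{smallmatrix}\right]$ by a block unitary similarity (which leaves $D$ unchanged), the Hermitian matrix $\re(e^{-i\theta}B_1)$ is traceless with characteristic polynomial $\lambda^3-P\lambda-q\cos(\theta-\phi)$, where $P=\tfrac14(\abs{a}^2+\abs{b}^2+\abs{c}^2)$, $q=\tfrac14\abs{abc}$ and $\phi=\arg(ac\overline{b})$. Because its largest root is a strictly increasing function of the free term $Q:=q\cos(\theta-\phi)$, one gets $h_E(\theta)=G(\cos(\theta-\phi))$ for a strictly increasing $G$; hence $h_E(\theta)=r$ has at most two solutions, giving \emph{at most two bridges}. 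Moreover, since $\re(e^{-i\theta}B_1)$ always has three real eigenvalues, the discriminant forces $q\le \tfrac{2}{3\sqrt3}P^{3/2}$, with equality (by AM--GM) iff $\abs{a}=\abs{b}=\abs{c}$; a double largest root, that is the flat portion of $E$, can occur only in this equality case, at $\theta=\phi+\pi$, where $h_E$ attains its minimum value $\sqrt{P/3}$. Thus the edge of $E$ always sits at the angle where $h_E$ is smallest. Consequently, if $r<\min_\theta h_E$ then $D\subset E$, $F(A)=E$ has one flat portion and no bridges; if $r\ge \min_\theta h_E$ the edge is submerged and only the (at most two) bridges survive; and $r\ge\max_\theta h_E$ gives the pure disk. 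In every case $F(A)$ has at most two flat portions, with the degenerate values $r=\min h_E$, $r=\max h_E$ handled routinely. The mutual exclusivity ``surviving edge versus bridges'' is exactly what prevents the apparent $1+2=3$ count, and isolating it is the crux of the argument.

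Finally, I would record explicit witnesses showing that $0$, $1$, and $2$ all occur for reducible $5$-by-$5$ nilpotents: $J_2\oplus[0]\oplus[0]\oplus[0]$ (a disk) realizes $0$; a $3$-by-$3$ nilpotent with $\abs{a}=\abs{b}=\abs{c}\neq0$ summed with $[0]\oplus[0]$ realizes $1$; and the $4$-by-$4$ matrix of Proposition~\ref{p:par} with two parallel flat portions, summed with $[0]$, realizes $2$.
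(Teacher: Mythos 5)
Your proof is correct and follows essentially the same route as the paper: both reduce everything to the $3\oplus 2$ block case and use the fact that the largest eigenvalue of $\re(e^{-i\theta}B_1)$ is a strictly increasing function of $\cos(\theta-\phi)$ to cap the number of common supporting lines of the disk and the $3$-by-$3$ range at two. The only real divergence is the equal-moduli (cardioid) subcase, which the paper settles by an explicit parametrization of the boundary generating curve and a case split on $r$, whereas you note that the flat portion of $F(B_1)$ necessarily sits at the angle minimizing its support function, so that a surviving edge forces $D\subset E$ and hence excludes bridges --- a slightly cleaner finish to the same argument.
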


\begin{proof} Suppose first that $\ker A\cap\ker A^*\neq \{0\}$. Then $A$ is unitarily similar to $A_1\oplus [0]$, where $A_1$ is also nilpotent. Consequently, $F(A)=F(A_1)$, and the statement follows from
Proposition~\ref{p:red} if $A_1$ is in its turn unitarily reducible and Theorem~\ref{twoflat} otherwise. Note that all three possibilities (no flat portions, one or two flat portions on $\partial F(A)$ already materialize
in this case.

Suppose now that $\ker A\cap\ker A^*=\{0\}$. Then the only possible structure of matrices unitarily similar to $A$ is $A_1\oplus A_2$, with one $2$-by-$2$ and one $3$-by-$3$ block. Multiplying $A$ by an appropriate
scalar and applying yet another unitary similarity if needed, we may without loss of generality suppose that
\[ A_1= \begin{pmatrix} 0 & r \\ 0 & 0 \end{pmatrix}, \quad  A_2= \begin{pmatrix} 0 & r_1 & r_2 \\ 0 & 0 & r_3 \\ 0 & 0 & 0 \end{pmatrix},\] where $r,r_1,r_3>0$, $r_2\geq 0$. The numerical range of $A_1$ is the
circular disk of the radius $r/2$ centered at the origin. If $r_2=0$, then $F(A_2)$ also is a circular disk centered at the origin  \cite[Theorem~4.1]{KRS}, and $F(A)$, being the largest of the two disks, has no flat portions
on its boundary. So, it remains to consider the case when all $r_j$ are positive.

The distance from the origin to the supporting line of $F(A_2)$ forming angle $\theta$ with the vertical axis equals the maximal eigenvalue of $\re e^{i\theta}A$, that is, half of the largest root of the polynomial
\eq{f} f_\theta(\lambda)=\lambda^3-\lambda (r_1^2+r_2^2+r_3^2)-2r_1r_2r_3\cos\theta. \en
Since $f_\theta$ is a monotonically increasing function of $\lambda$ for $\lambda\geq\lambda_0=\left(\frac{r_1^2+r_2^2+r_3^2}{3}\right)^{1/2}$, and since $f_\theta(\lambda_{0})\leq 0$ due to the inequality between the arithmetic and geometric means of $r_j^2$, the maximal root $\lambda(\theta)$ of $f_\theta$ is bigger than $\lambda_0$. If $\cos\theta_1<\cos\theta_2$, then $f_{\theta_1}(\lambda(\theta_2))>0$, and so
$\lambda(\theta_1)<\lambda(\theta_2)$. In other words, the maximal root of $f_\theta$ is a strictly monotonic function of $\theta$ both on $[0,\pi]$ and $[-\pi,0]$. So, the disk $F(A_1)$ will have exactly two common supporting lines with $F(A_2)$ when $r/2$ lies strictly between the minimal and maximal distance from the points of $\partial F(A_2)$ to the origin, and none otherwise. Further reasoning depends on whether or not the parameters $r_j$ are all equal.

{\sl Case 1.} Among $r_j$ at least two are distinct. According to already cited Theorem~4.1 from \cite{KRS}, $F(A_2)$ has the so called ``ovular shape''; in particular, there are no flat portions on its boundary.
Then the flat portions on the boundary of $F(A)$ are exactly those lying on common supporting lines of $F(A_1)$ and $F(A_2)$, and so there are either two or none of them. To be more specific, the distance from the origin to the supporting line at $\theta$ discussed above is (using Vi\`ete's formula)

\begin{equation*}
\lambda(\theta)/2=\sqrt{s/3} \cos \left( \frac{1}{3} \arccos \left( \frac{3 \sqrt{3} t \cos\theta}{s^{3/2}} \right)  \right),
\end{equation*}
where $s=r_1^2+r_2^2+r_3^2$ and $t=r_1r_2r_3.$ Since the distance from the origin to the tangent line of the disk $F(A_1)$ is a constant $r/2$,  there will be two  values of $\theta$ (opposite of each other) for which these  tangent lines coincide with supporting lines of $F(A_2)$ if and only if

\begin{equation*} \sqrt{s/3} \cos \left( \frac{1}{3} \arccos \left( -\frac{3 \sqrt{3} t }{s^{3/2}} \right)  \right)<\frac{r}{2}<\sqrt{s/3} \cos \left( \frac{1}{3} \arccos \left( \frac{3 \sqrt{3} t }{s^{3/2}} \right)  \right),
\end{equation*}
and none otherwise.

{\sl Case 2.} All $r_j$ are equal. The boundary generating curve $C(A_2)$ (see Section~\ref{s:proof} for the definition) is then a cardioid, appropriately shifted and scaled, as shown (yet again) in \cite[Theorem~4.1]{KRS}.
Consequently, $\partial F(A_2)$ itself has a (vertical) flat portion, and we need to go into more details. To this end, suppose (without loss of generality) that $r_1=r_2=r_3=3$, and invoke formula on p. 130 of \cite{KRS}, according to which $C(A_2)$ is given by the parametric equations \eq{car}  x(\theta)=2\cos\theta+\cos 2\theta, \quad y(\theta) = 2\sin\theta+\sin 2\theta, \qquad \theta\in [-\pi,\pi]. \en
The boundary of $F(A_2)$ is the union of the arc $\gamma$ of \eqref{car} corresponding to $\theta\in [-2\pi/3,2\pi/3]$ with the vertical line segment $\ell$ connecting its endpoints. The remaining portion of the curve \eqref{car}
lies inside $F(A_2)$.

Observe also that $\abs{x(\theta)+iy(\theta)}=\sqrt{5+4\cos\theta}$ is an even function of $\theta$ monotonically decreasing on $[0,\pi]$. Putting these pieces together yields the following:

For $r\leq 3$, the disk $F(A_1)$ lies inside $F(A_2)$.  Thus, $F(A)=F(A_2)$ has one flat portion on the boundary.

For $3<r\leq 2\sqrt{3}$ the circle $\partial F(A_1)$ intersects $\partial F(A_2)$ at two points of $\ell$. This results in two flat portions on $\partial F(A)$.

For $2\sqrt{3}<r<6$ the circle $\partial F(A_1)$ intersects $\partial F(A_2)$ at two points of $\gamma$, while $\ell$ lies inside $F(A_1)$. This again results in two flat portions on $\partial F(A)$.

Finally, if $r\geq 6$, then $F(A_2)$ lies in $F(A_1)$, so $F(A)=F(A_1)$ is a circular disk, and there are no flat portions on its boundary.

\end{proof}

The case where $r_1=r_2=r_2=3$ and $r=3.3$, which results in two flat portions caused by intersections between the circular disk and the numerical range of the $3$-by-$3$ nilpotent matrix, is shown in Figure~\ref{5by5twoflat}. The case where $r_1=r_2=r_3=r=3.3$, which results in one flat portion from the numerical range of the $3$-by-$3$ block, with the circular disk tangent inside is shown in Figure~\ref{5by5oneflat}.  In all cases the cardiod boundary generating curve and the boundary of the numerical range of the $2$-by-$2$ matrix is included.

\begin{figure}[h]
\centering
\includegraphics[scale=.4]{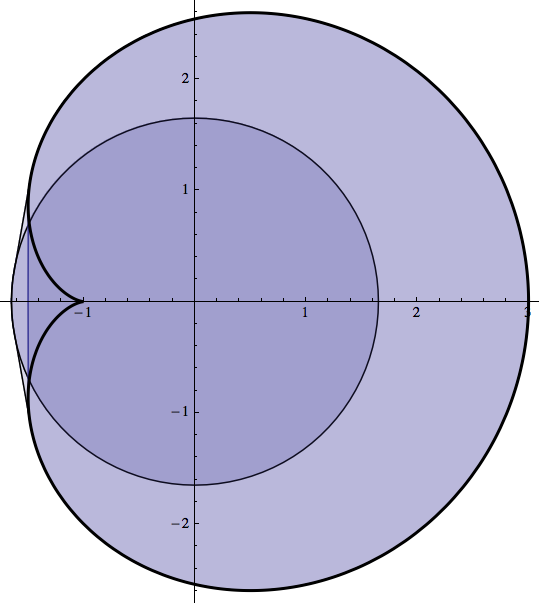}
\caption{Numerical range of reducible $5$-by-$5$ matrix with two flat portions}\label{5by5twoflat}
\end{figure}

\begin{figure}[h]
\centering
\includegraphics[scale=.4]{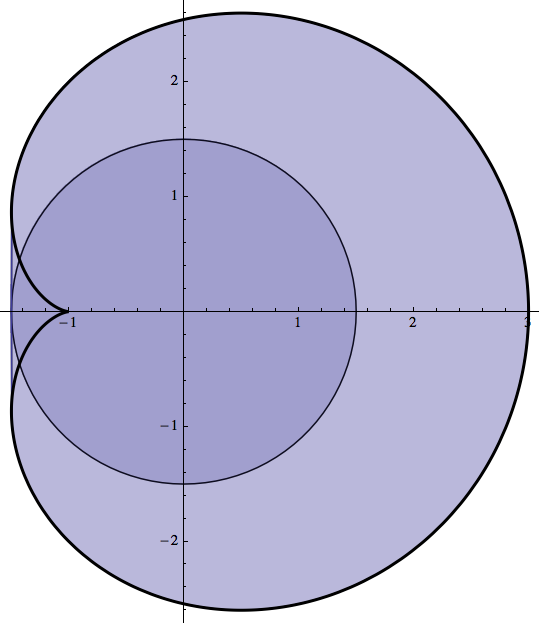}
\caption{Numerical range of reducible $5$-by-$5$ matrix with one flat portion}\label{5by5oneflat}
\end{figure}

\clearpage

\providecommand{\bysame}{\leavevmode\hbox to3em{\hrulefill}\thinspace}
\providecommand{\MR}{\relax\ifhmode\unskip\space\fi MR }
\providecommand{\MRhref}[2]{%
  \href{http://www.ams.org/mathscinet-getitem?mr=#1}{#2}
}
\providecommand{\href}[2]{#2}

\begin{thebibliography}{10}

\bibitem{Bake}
G.~A. Baker, \emph{A new derivation of {N}ewton's identities and their
  application to the calculation of the eigenvalues of a matrix}, J. Soc. Ind.
  Appl. Math \textbf{7} (1959), 143--148.

\bibitem{Bries}
E.~Brieskorn and H.~Kn{\"{o}}rrer, \emph{Plane algebraic curves},
  Birkh{\"{a}}user Verlag Basel, Basel, 1986.

\bibitem{BS041}
E.~Brown and I.~Spitkovsky, \emph{On flat portions on the boundary of the
  numerical range}, Linear Algebra Appl. \textbf{390} (2004), 75--109.

\bibitem{GauWu081}
H.-L. Gau and P.~Y. Wu, \emph{Line segments and elliptic arcs on the boundary
  of a numerical range}, Linear Multilinear Algebra \textbf{56} (2008),
  no.~1-2, 131--142.

\bibitem{GauWu08}
\bysame, \emph{Numerical ranges of nilpotent operators}, Linear Algebra Appl.
  \textbf{429} (2008), no.~4, 716--726.

\bibitem{GusRa}
K.~E. Gustafson and D.~K.~M. Rao, \emph{Numerical range. {T}he field of values
  of linear operators and matrices}, Springer, New York, 1997.

\bibitem{HJ1}
R.~A. Horn and C.~R. Johnson, \emph{Matrix analysis}, Cambridge University
  Press, New York, 1985.

\bibitem{KRS}
D.~Keeler, L.~Rodman, and I.~Spitkovsky, \emph{The numerical range of $3 \times
  3$ matrices}, Linear Algebra Appl. \textbf{252} (1997), 115--139.

\bibitem{Ki}
R.~Kippenhahn, \emph{{\"{U}}ber den {W}ertevorrat einer {M}atrix}, Math. Nachr.
  \textbf{6} (1951), 193--228.

\bibitem{Ki08}
\bysame, \emph{On the numerical range of a matrix}, Linear Multilinear Algebra
  \textbf{56} (2008), no.~1-2, 185--225, Translated from the German by Paul F.
  Zachlin and Michiel E. Hochstenbach.

\bibitem{LLS13}
T.~Leake, B.~Lins, and I.~M. Spitkovsky, \emph{Pre-images of boundary points of
  the numerical range}, Operators and Matrices \textbf{8} (2014), 699--724.

\bibitem{Marc}
M.~Marcus and C.~Pesce, \emph{Computer generated numerical ranges and some
  resulting theorems}, Linear and Multilinear Algebra \textbf{20} (1987),
  121--157.

\bibitem{RS05}
L.~Rodman and I.~M. Spitkovsky, \emph{{$3\times3$} matrices with a flat portion
  on the boundary of the numerical range}, Linear Algebra Appl. \textbf{397}
  (2005), 193--207.

\bibitem{TsoWu}
Shu-Hsien Tso and Pei~Yuan Wu, \emph{Matricial ranges of quadratic operators},
  Rocky Mountain J. Math. \textbf{29} (1999), no.~3, 1139--1152.

\bibitem{Walk}
R.~J. Walker, \emph{Algebraic curves}, Princeton University Press, Princeton,
  1950.

\end{thebibliography}

 \end{document}